\setlist{beginpenalty=100}
\newtheorem{thm}{Theorem}[section]
\newtheorem{lem}[thm]{Lemma}
\newtheorem{cor}[thm]{Corollary}
\newtheorem{prop}[thm]{Proposition}
\theoremstyle{definition}
\newtheorem{de}[thm]{Definition}
\theoremstyle{remark}
\newtheorem{rem}[thm]{Remark}
\numberwithin{equation}{section}
\newcommand{\rmnum}[1]{\romannumeral #1}
\newcommand{\Rmnum}[1]{\expandafter\@slowromancap\romannumeral #1@}
\newcommand{\be}{\mathbf{e}}
\newcommand{\bN}{\mathcal{N}}
\newcommand{\bv}{\mathbf{v}}
\newcommand{\bw}{\mathbf{w}}
\newcommand{\bx}{\mathbf{x}}
\newcommand{\R}{\mathbb{R}}
\newcommand{\K}{\mathbb{K}}
\DeclareMathOperator{\SL}{SL}
\DeclareMathOperator{\GL}{GL}
\newcommand{\Z}{\mathbb{Z}}
\newcommand{\Q}{\mathbb{Q}}
\newcommand{\F}{\mathbb{F}}
\DeclareMathOperator{\diag}{diag}
\DeclareMathOperator{\Stab}{Stab}
\DeclarePairedDelimiterX\set[1]{\lbrace}{\rbrace}{%

#1
}
\newcommand{\red}[1]{{\color{red}#1}}
\begin{document}

\title{The Product of linear forms over function fields}

\author{Wenyu Guo}
\address{Shanghai Center for Mathematical Sciences, Jiangwan Campus, Fudan University, No.2005 Songhu Road, Shanghai, 200438, China}
\email{wyguo23@m.fudan.edu.cn}

\author{Xuan Liu}

\address{Shanghai Center for Mathematical Sciences, Jiangwan Campus, Fudan University, No.2005 Songhu Road, Shanghai, 200438, China}
\email{24110840009@m.fudan.edu.cn}

\author{Ronggang Shi}
\address{Shanghai Center for Mathematical Sciences, Jiangwan Campus, Fudan University, No.2005 Songhu Road, Shanghai, 200438, China}
 \email{ronggang@fudan.edu.cn}

\thanks{The author is supported by  NSFC 12161141014, NSF Shanghai 22ZR1406200
and the New Cornerstone Science Foundation.}

\subjclass[2010]{Primary 11H46; Secondary 11H50,  37C85}

\keywords{product of linear forms, function field, reduction theory, group actions}


\begin{abstract}
The aim of this paper is to study the product of $n$ linear forms over function fields. We calculate  the maximum value of the minima of the forms with determinant one when
$n$ is  small. The value is  equal to  the natural  bound given by algebraic number theory.
Our proof is based on a reduction theory of 
 diagonal group orbits  on homogeneous spaces.
We also  show that the forms defined algebraically correspond
  to periodic orbits  with respect to the diagonal group actions. 
\end{abstract}

\maketitle


\section{Introduction}\label{sec;introduction}

The study  of the  product of  linear forms with real coefficients  has a long history, see e.g.~\cite{Davenport38}\cite{Rogers50}\cite{Cassels55}.  Some challenging open questions remain unanswered, such as Littlewood conjecture and Cassels-Swinnerton-Dyer-Margulis conjecture.  
In recent years the dynamics of diagonal group actions on homogeneous spaces plays an important role, see e.g.~\cite{LW} and \cite{EKL}. 

The aim of this paper is to study the product of linear forms with coefficients in 
the  field $\K=\F((t^{-1}))$ of formal Laurent series over a finite field $\F$. 
These forms are related  to the diagonal group actions on the homogeneous space $G/\Gamma$ where  $G=\SL_n(\K)$ and $\Gamma=\SL_n(\F[t])$.
Besides its own interests, we hope to give some insights to the classical questions.

Suppose the field $\F$  has $q=p^e$ elements where $p$ is a prime number
and $e$ is a positive integer.
 We endow  $\K$ with an ultrametric norm $|\cdot|$ so that  
$|bt^n|=q^n$ for $b\in \F\setminus\{ 0\}, n\in \Z$.  It is  the completion of the  rational function field $\F(t)$ at $\infty$. 

Let $L(\bx)=l_1(\bx) \cdots l_n(\bx)$ be a nondegenerate product of linear forms in $n$-variables ($n\ge 2$), i.e.~$l_1, \ldots, l_n$ are linearly independent linear forms on $\K^n$.  It is clear that $L$ is nondegenerate if and only if it has nonzero determinant $\det(L)$ which is the determinant of the matrix $g=(a_{ij})$ if $l_i(\bx)=a_{i1}x_1+ \cdots + a_{in}x_n$. 
In this way,  an $n$-by-$n$ matrix $g$ gives a product of linear forms $L_g$. This gives a parameterization of products of linear forms by matrices. 

We are trying to understand the values of $L$ at $\bx\in \F[t]^n$. 
Since  $(aL)(\bx)=a\cdot L(\bx) $ for $a\in \K$, it suffices to understand the values of  forms which have determinant $1$. 
These forms are parameterized by elements of $G$.
The minimal norm of values of $L$ is denoted by
\[
\bN (L)=\inf  \{ |L(\bx)|: \bx\in \F[t]^n, \bx \neq 0   \}.
\]
Let  $\mathfrak m_n$ be the infimum of the real numbers in the set
\[
\mathfrak M_n=\{ \bN(L_g)^{-1}:  g\in G\}=\{ \bN(L_g)^{-1} |\det (g)|:  g\in \GL_n(\K)\}.
\]
In this paper we  compute $\mathfrak m_n$ when $\frac{n}{ q}$ is small. 
\begin{thm}\label{thm;main}
We have $\mathfrak m_n= q^{n-1}$ when $2\le n\le q+1$ and $\mathfrak m_n=q^ n$ when $q+2\le n\le N_q$ where 
\begin{align*}
    N_q= \begin{cases}
    4 & \text{if } q=2,\\
    q+\lfloor2\sqrt{q}\rfloor & \text{if  $p|\lfloor2\sqrt{q}\rfloor$, $e$ is odd  and $e\ge 5$}, \\
    q+\lfloor2\sqrt{q} \rfloor +1& \text{otherwise}.
\end{cases}
\end{align*}
\end{thm}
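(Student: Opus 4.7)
The plan is to establish matching upper and lower bounds on $\mathfrak{m}_n$: the upper bound via a construction of explicit extremal forms from function field algebra, and the lower bound via a short non-archimedean pigeonhole argument inside $\mathcal{O}_\K^n$. The two bounds meet precisely at the Hasse-Weil / Deuring-Waterhouse thresholds that define $N_q$.

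\textbf{Upper bound via algebraic forms.} I take a smooth projective curve $C/\F_q$ of genus $g(C)$ with a degree-$n$ morphism $\pi\colon C\to \mathbb{P}^1_{\F_q}$ whose fiber over $\infty$ consists of $n$ distinct $\F_q$-rational points. Writing $K=\F_q(t)$, $K'=\F_q(C)$, and letting $\mathcal{O}\subset K'$ be the integral closure of $\F_q[t]$, an $\F_q[t]$-basis $\omega_1,\ldots,\omega_n$ of $\mathcal{O}$ together with the $n$ embeddings $K'\hookrightarrow\K$ coming from the points above $\infty$ assembles into a matrix $h\in \GL_n(\K)$ for which $L_h(\bx)=N_{K'/K}\!\bigl(\sum_i x_i\omega_i\bigr)\in \F_q[t]\sm\{0\}$ for every nonzero $\bx\in\F_q[t]^n$. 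Hence $\bN(L_h)\ge 1$, and by Riemann-Hurwitz in the tame, split-at-$\infty$ case $|\det h|=q^{g(C)+n-1}$, so $\mathfrak{m}_n\le q^{g(C)+n-1}$. Taking $g(C)=0$ is possible precisely when $n\le\#\mathbb{P}^1(\F_q)=q+1$, giving $\mathfrak{m}_n\le q^{n-1}$ in the first regime; for $n\ge q+2$ one takes $g(C)=1$, giving $\mathfrak{m}_n\le q^n$, with the admissible range $n\le N_q$ controlled by the Deuring-Waterhouse theorem on $\max_{E/\F_q}\#E(\F_q)$, producing the three cases in the definition of $N_q$.

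\textbf{Lower bound via pigeonhole.} Fix $g\in G$ and set $\Lambda=g\F_q[t]^n$, a lattice of covolume $q^{-n}$ in $\K^n$. A volume count gives $|\Lambda\cap\mathcal{O}_\K^n|\ge q^n$. If $\bN(L_g)>q^{-n}$ then $t^{-1}\mathcal{O}_\K^n\cap\Lambda=\{0\}$ (else some nonzero vector gives $|L_g|\le q^{-n}$), so the reduction $r\colon \Lambda\cap\mathcal{O}_\K^n\to\mathcal{O}_\K^n/t^{-1}\mathcal{O}_\K^n=\F_q^n$ is an $\F_q$-linear injection; combined with the volume inequality it is a bijection. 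Lifting $e_i\in\F_q^n$ via $r^{-1}$ produces $v\in\Lambda$ with $|v_i|=1$ and $|v_j|\le q^{-1}$ for $j\ne i$, so $|L_g(v)|\le q^{-(n-1)}$; this establishes $\mathfrak{m}_n\ge q^{n-1}$ unconditionally. To sharpen to $\mathfrak{m}_n\ge q^n$ when $n\ge q+2$, assume $\bN(L_g)>q^{-n}$: then equality $|L_g(w_i)|=q^{-(n-1)}$ is forced at each lift $w_i=r^{-1}(e_i)$, so $|(w_i)_j|=q^{-1}$ exactly for $j\ne i$, with leading coefficients $c_{ji}\in\F_q^\times$ via $(w_i)_j\equiv c_{ji}t^{-1}\pmod{t^{-2}\mathcal{O}_\K}$. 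For any $\alpha\ne\beta$ and $c_1,c_2\in\F_q^\times$, the lattice vector $c_1w_\alpha+c_2w_\beta$ then has $|L_g|\le q^{-(n-2+s)}$ with $s=\#\{k\notin\{\alpha,\beta\}:c_1c_{k\alpha}+c_2c_{k\beta}=0\}$, so the standing assumption forces $s\le 1$ and therefore that the map $k\mapsto c_{k\alpha}/c_{k\beta}$ from $\{1,\dots,n\}\sm\{\alpha,\beta\}$ to $\F_q^\times$ is injective. This gives $n-2\le q-1$, contradicting $n\ge q+2$.

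\textbf{Main obstacle.} With the pigeonhole in hand the lower bound is short, so the real work is on the upper-bound side: verifying the discriminant formula $|\det h|=q^{g(C)+n-1}$ in the tame, split-at-$\infty$ setting, and producing the requisite degree-$n$ covers in each regime, which ultimately reduces to counting rational points on $\mathbb{P}^1$ and on elliptic curves via Hasse-Weil / Deuring-Waterhouse. The exceptional cases in $N_q$ reflect obstructions to the construction in small characteristic --- in particular the difficulty of arranging tame ramification of degree-$n$ covers over a small $\F_q$ when $p$ divides $n$ or $\lfloor 2\sqrt{q}\rfloor$. These obstructions also fit the correspondence promised in the abstract between algebraic forms and periodic orbits of the diagonal group $A$ on $G/\Gamma$: extremal $g$ correspond to periodic orbits attached to orders in function fields, and the threshold behaviour of $\mathfrak{m}_n$ reflects the disappearance of suitable orders once one passes the genus-$0$ and genus-$1$ thresholds.
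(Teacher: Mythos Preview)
Your upper bound is the same construction as the paper's: build a norm form from an integral basis of a degree-$n$ extension of $\F_q(t)$ split at $\infty$, compute $|\det h|=q^{g_k+n-1}$ via Riemann--Hurwitz, and minimize the genus. One small imprecision: Deuring--Waterhouse alone yields only two of the three cases for $N_q$ (it gives $\max_E\#E(\F_q)$); the anomaly $N_2=4$ is not a tame-ramification issue but the fact that over $\F_2$ an elliptic curve with $5$ rational points admits no function $t$ with pole divisor $\sum_{i=1}^5 D_{v_i}$ (the relevant Riemann--Roch space has dimension $5$ but every candidate vanishes at one of the points). This does not affect your proof of the theorem, since for $n\le N_q$ the required cover exists, but your last paragraph mislocates the obstruction.

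Your lower bound takes a genuinely different route from the paper. The paper first establishes a full reduction theory---the decomposition $G=A\,\SL_n(\mathfrak o;\mathfrak p)\,\Gamma$ via successive minima (their Theorem~1.3)---and then runs the pigeonhole inside the normalized group $\SL_n(\mathfrak o;\mathfrak p)$ (Propositions~2.8 and~2.9). You bypass the reduction theorem entirely: a one-line Minkowski count forces $|\Lambda\cap\mathcal O_\K^n|\ge q^n$, and the hypothesis $\bN(L_g)>q^{-n}$ makes the reduction $\Lambda\cap\mathcal O_\K^n\to\F_q^n$ a bijection, so you can \emph{directly} pull back the standard basis to vectors $w_i$ playing the role of the columns of an $\SL_n(\mathfrak o;\mathfrak p)$-representative. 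Your second pigeonhole (injectivity of $k\mapsto c_{k\alpha}/c_{k\beta}$ into $\F_q^\times$, forcing $n-2\le q-1$) is the dual of the paper's (ratios $b_{i2}b_{i1}^{-1}\bmod\mathfrak p$ landing in $\F_q\cup\{\infty\}$), and is slightly sharper for the purpose at hand. Your argument is shorter and self-contained; the paper's approach costs more but delivers the reduction theorem as a result of independent interest and the asymptotic lower bound $\liminf_n\log_q\mathfrak m_n/n\ge q/(q-1)$, which your method does not obviously reach.
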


Here $\lfloor r\rfloor$ is the smallest integer less than or equal to $r$. 
The value of $\mathfrak m_n$ can be interpreted algebraically as the minimal absolute value of 
discriminants of certain degree $n$ extensions of $\F(t)$.
Recall that for a finite separable extension $k$ of 
degree $n$ over $\F(t)$ the discriminant of $k/\F(t)$ is 
\[
d_k=\det (\mathrm{Tr}_{k/\F(t)}(\alpha_i\alpha_j))
\]
where $\alpha_1,\ldots , \alpha_n$ is an $\F[t]$-basis of the integral closure of $\F[t]$ in $k$. 
We remark here that $d_k$ is well-defined up to multiplications by 
elements of  $\{c^2: 0\neq c\in \F\}$. This ambigurity will not affect its 
usage and is more convenient for our purposes. 
Moreover, we say $k$ is split at $\infty$ if $k=\F(t, \alpha)$ and the minimal polynomial of $\alpha$ over $\F(t)$ has $n$-distinct roots in $\K$. 
This is equivalent to  that 
there are $n$-distinct $\F(t)$-embeddings of $k$ into $\K$, say $\sigma_1, \ldots, 
\sigma_n$.  For the 
\begin{align}\label{eq;rational}
L(\bx)=\prod_{i=1}^n \sigma_i(\alpha_1) x_1+\sigma_i(\alpha_2)x_2+\cdots+ \sigma_i(\alpha_n) x_n,
\end{align}
one has $\bN (L)=1$ and $\det (L)= |d_k|^{1/2}$. 
Therefore, $|d_k|^{1/2}\in  \mathfrak M_n$ for all degree $n$ extensions $k$ of $\F(t)$ split at $\infty$. We use $\textup{disc-}\mathfrak{m}_n$ to denote the minimum of these $|d_k|^{1/2}$. Since $\mathfrak m_n $ is the infimum of values in $\mathfrak M_n$, we have 
\begin{align}\label{eq;upper}
\mathfrak m_n \le \textup{disc-}\mathfrak{m}_n.
\end{align}
For those integers $n$ in Theorem \ref{thm;main} we actually have equality.
\begin{thm}\label{thm;compare}
For $2\le n\le N_q$ we have $\mathfrak m_n = \textup{disc-}\mathfrak{m}_n$.
\end{thm}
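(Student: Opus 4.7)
The plan is to combine Theorem \ref{thm;main}, which pins down $\mathfrak{m}_n$, with an explicit construction of an extension realising the same value of the discriminant. By \eqref{eq;upper} we already have $\mathfrak{m}_n \le \textup{disc-}\mathfrak{m}_n$, so it is enough to produce, for each $n$ in the stated range, a separable degree $n$ extension $k/\F(t)$ split at $\infty$ with $|d_k|^{1/2}=\mathfrak{m}_n$. The Riemann--Hurwitz formula for a degree $n$ separable cover $Y\to\mathbb{P}^1$ reads $\deg d_k=2g_Y+2n-2$, so the two ranges in Theorem \ref{thm;main} correspond respectively to taking $Y$ of genus $0$ and $1$.

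In the genus $0$ range $2\le n\le q+1$, I would take $k=\F(s)$ with
\[
t = \sum_{i=1}^{n} \frac{1}{s-a_i}
\]
for $n$ distinct $a_i\in\F$, and, when $n=q+1$, allowing one $a_i=\infty$ with its summand replaced by $s$. Such a choice is available because $|\mathbb{P}^1(\F)|=q+1\ge n$. By construction, the polar divisor of $t$ is $a_1+\cdots+a_n$, so the place at infinity of $\F(t)$ splits completely in $\F(s)$; Riemann--Hurwitz then forces $|d_k|^{1/2}=q^{n-1}$.

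In the genus $1$ range $q+2\le n\le N_q$, I would realise $k$ as the function field of an elliptic curve $E/\F$ equipped with a function $t$ whose polar divisor is a sum $D=P_1+\cdots+P_n$ of $n$ distinct $\F$-rational points. Riemann--Roch gives $\dim_\F L(D)=n$, and the existence of such a $t$ reduces, via the residue theorem, to finding a vector with all nonzero entries in a fixed codimension-one subspace of $\F^n$. The three branches in the definition of $N_q$ are exactly what one needs for both inputs: the Waterhouse--Deuring classification of Frobenius traces over $\F_q$ shows that $|E(\F)|\ge n$ is achievable whenever $n\le q+1+\lfloor 2\sqrt q\rfloor$, with a drop by one in the supersingular subcase ``$p\mid\lfloor 2\sqrt q\rfloor$, $e$ odd, $e\ge 5$''; and the residue compatibility holds in every case except $q=2$ with $n=5$, where $\F_2^\times=\{1\}$ forces the only candidate to be the all-ones vector and the residue relation has the wrong parity, which accounts for the explicit exception $N_2=4$. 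Given such a $t$, Riemann--Hurwitz again yields $|d_k|^{1/2}=q^n$.

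The main obstacle is precisely the case analysis above: matching each branch of the definition of $N_q$ with the Waterhouse--Deuring existence theorem and the finite-field residue compatibility, and in particular isolating the mechanism that singles out $q=2$. Once the appropriate $k$ is produced, the equality of discriminants is immediate from Riemann--Hurwitz.
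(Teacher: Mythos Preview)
Your proposal is correct and follows the same overall strategy as the paper: combine the lower bound on $\mathfrak m_n$ (Theorem~\ref{thm;main}) with the inequality \eqref{eq;upper}, and then exhibit, for each $n$ in the stated range, a degree $n$ extension $k/\F(t)$ split at~$\infty$ whose genus is $0$ (for $2\le n\le q+1$) or $1$ (for $q+2\le n\le N_q$), so that Riemann--Hurwitz forces $|d_k|^{1/2}=q^{n-1+g_k}=\mathfrak m_n$. This is exactly the content of the paper's Proposition~\ref{prop;main} together with Lemma~\ref{thm;ineq genus}, Lemma~\ref{lem;g=1}, Lemma~\ref{thm;count}, and Propositions~\ref{thm;q>2}--\ref{thm;q=2}.

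Your execution differs in two pleasant ways. In the genus~$0$ range you take $t$ to be a \emph{sum} of simple partial fractions rather than the paper's product $t=\prod(y-c_i)^{-1}$ (with an ad~hoc modification at $n=q+1$); both choices give the same polar divisor, so either works. In the genus~$1$ range you replace the paper's inductive construction (building $t_{m+1}$ from $t_m$ and the auxiliary functions $x_{ij}$) by a single global argument: the map $L(D)\to\F^n$ recording residues against a nowhere-vanishing holomorphic differential has image exactly the hyperplane $\{\sum x_i=0\}$, and one seeks a point of this hyperplane with no zero coordinate. This is cleaner, and it explains uniformly why the construction succeeds for $q\ge 3$ and all $n\ge 2$, while for $q=2$ it forces the all-ones vector and hence requires $n$ even---precisely isolating the failure at $(q,n)=(2,5)$ that the paper handles by a direct local-expansion contradiction. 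The paper's argument is more explicit and self-contained; yours is more conceptual and gives a one-line reason for the exceptional value $N_2=4$. Substantively, the two are equivalent.
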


In the classical case $\F[t], \F(t)$ and $ \F((t^{-1}))$  corresponds to  $\Z, \Q$ and $\R$, respectively. A field $k$ split at infinity corresponds to  a totally real extension of $\Q$. 
In the classical case it is known that $\mathfrak m_2 =\sqrt 5=\textup{disc-}\mathfrak{m}_2$ by Hurwitz \cite{Hurwitz} and
$\mathfrak m_3 =7=\textup{disc-}\mathfrak{m}_3$ by Davenport \cite{Davenport38}. Even for $n=4$, the precise value of 
$\mathfrak m_4 $ is not known. Theorem \ref{thm;compare} gives more evidence that equality might  hold in (\ref{eq;upper}) for both real and function field cases.

We estimate $\mathfrak m_n$ from below and above  by different methods. For the lower bound we
develop a reduction theory  for $A$-orbits of $G/\Gamma$, 
where 
$A$ be subgroup of $G$ consisting of diagonal matrices. 
We state the reduction as a decomposition of  the group  $G$. 
Let $\mathfrak{p}=\{\alpha\in\K: |\alpha|<1\}$ be the 
unique maximal ideal of the  valuation ring   $\mathfrak{o}$ $=\{\alpha\in\K:|\alpha|\leq 1\}$ of  $\K$. 

\begin{thm}\label{thm:AKT}
We have 	$G=A\SL_n(\mathfrak o;\mathfrak p)\Gamma$, where $\SL_n(\mathfrak o;\mathfrak p)=\{g\in \SL_n(\mathfrak o): g\equiv  1_n \mod \mathfrak p \} $.
\end{thm}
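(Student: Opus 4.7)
The plan is to reformulate the statement as a reduction problem for lattices and then to solve it via a non-archimedean analogue of the LLL algorithm. The identity $g = a c \gamma$ with $a\in A$, $c\in C := \SL_n(\mathfrak{o};\mathfrak{p})$, $\gamma\in\Gamma$ is equivalent to finding $\delta\in\Gamma$ with $g\delta\in AC$ (then take $\gamma=\delta^{-1}$, $ac=g\delta$). A matrix $M\in G$ lies in $AC$ exactly when it has \emph{dominant diagonal}, $|M_{ij}|<|M_{ii}|$ for all $i\neq j$: under this condition, the non-archimedean triangle inequality gives $|\det M|=|\prod_i M_{ii}|$, so $\det M=1$ forces the leading monomials $a_i$ of the $M_{ii}$ to satisfy $\prod a_i=1$; then $a=\diag(a_1,\ldots,a_n)\in A$ and $a^{-1}M\equiv I\pmod{\mathfrak{p}}$ lies in $C$. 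Writing $\Lambda := g\F[t]^n\subset \K^n$, the task reduces to producing a basis of this rank-$n$ lattice whose coordinate matrix is dominant-diagonal.

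To produce such a basis, I would iterate two operations, both lying in $\Gamma$: (i) \emph{pivoting}, by right-multiplication by signed permutation matrices in $\SL_n(\F)\subset\Gamma$, to place the entry of maximum absolute value in each row on the diagonal; and (ii) \emph{clearing}, for each off-diagonal position $(k,j)$ with $|M_{kj}|\geq|M_{kk}|$, by using the $\F$-vector space decomposition $\K=\F[t]\oplus\mathfrak{p}$ to write $M_{kj}/M_{kk}=p(t)+r$ with $p\in\F[t]$ and $r\in\mathfrak{p}$, and then performing the column operation $v_j\mapsto v_j-p(t)v_k$. The new $(k,j)$-entry equals $r\cdot M_{kk}$, whose absolute value is strictly less than $|M_{kk}|$.

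The main obstacle is termination: the clearing operation modifies an entire column, so reducing entries in one row can destroy dominance already achieved in other rows. I would control this by introducing a monotone potential $\Phi(M)$—for instance, the lexicographic tuple of degrees $(\deg M_{11},\ldots,\deg M_{nn})$, or the count of pairs $(i,j)$ with $i\neq j$ failing the dominance inequality—and checking that it strictly decreases under each pivot-then-clear combined step whenever the dominant-diagonal condition fails. Since $|\K^\times|=q^{\Z}$ is discrete and $\Phi$ is bounded below, the algorithm must terminate in finitely many steps. Verifying the monotonicity—in particular, that the polynomial $p(t)$ produced by the clearing step does not inflate entries in other rows uncontrollably—is the most delicate point, and relies on careful use of the strong triangle inequality together with the constraint $\det M=1$. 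Once the algorithm terminates, the resulting basis gives a $\delta\in\Gamma$ with $g\delta\in AC$, yielding the desired factorization $g\in AC\Gamma$.
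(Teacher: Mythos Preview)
Your reformulation is correct: showing $G = A\,\SL_n(\mathfrak o;\mathfrak p)\,\Gamma$ amounts to finding, for each $g\in G$, some $\delta\in\Gamma$ such that $M=g\delta$ has dominant diagonal $|M_{ij}|<|M_{ii}|$ for $i\neq j$, and your argument that such an $M$ factors as $ac$ with $a\in A$, $c\in\SL_n(\mathfrak o;\mathfrak p)$ is fine.

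The gap is exactly where you locate it: termination. Neither of your proposed potentials works as stated. The pivoting step---placing the row-maximum of \emph{every} row simultaneously on the diagonal by a single column permutation---need not even be possible: two rows may attain their maxima in the same column. And if you pivot one row at a time, clearing the entry $(k,j)$ replaces $M_{ij}$ by $M_{ij}-p\,M_{ik}$ for all $i$; with $|p|=1$ this can create a new tie $|M_{ij}|=|M_{ii}|$ in a row $i$ you had already fixed, so the count of failing pairs need not drop. The lexicographic tuple of diagonal degrees likewise has no reason to be monotone under a column swap. You have correctly identified the obstacle but not overcome it; invoking ``the constraint $\det M=1$'' does not by itself supply a decreasing potential.

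The paper sidesteps this by organizing the reduction around the successive minima $\lambda_1\le\cdots\le\lambda_n$ of $\Lambda=gR^n$. Two structural facts drive the argument. First (Lemma~\ref{lem;Succesive minima}), any linearly independent $\bv_1,\ldots,\bv_n\in\Lambda$ with $\|\bv_i\|=\lambda_i$ already form an $R$-basis---a purely ultrametric phenomenon with no archimedean analogue. Second (Proposition~\ref{lem;good basis}), such a basis can be refined by induction on an index $\ell$ so that, after a coordinate permutation, the conditions $|c_{ij}|<|c_{ii}|=\lambda_i$ hold for all $i,j\le\ell$. The inductive step adds one more row and column to the ``good'' block while provably not disturbing the previously achieved strict inequalities, because the reduction coefficients are controlled by ratios $\lambda_\ell/\lambda_j\le 1$ and the ultrametric inequality then bounds every affected entry. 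Termination is immediate: the induction runs from $\ell=1$ to $\ell=n$. The successive minima supply both the target (the $\lambda_i$ are the diagonal absolute values one is aiming for) and the monotone quantity (the index $\ell$) that your LLL sketch lacks.
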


The reduction relies on the ultrametric norm  and the principal ideal structure of $\F[t]$.
Theorem \ref{thm:AKT} refines \cite[Theorem 1.24]{Aranov}. The analogue of Theorem \ref{thm:AKT} in the real case is called DOTU-decomposition and it does not hold when  $n\ge 64$, see \cite[p.~624]{gruber} and the references there.
We use Theorem \ref{thm:AKT}  to give 
 a lower bound of  $\mathfrak m_n$ for all $n$ and estimate asymptotically from below. 
\begin{thm}\label{thm;mainlow}
$\log _q \mathfrak m_n\ge \max \{n-2+  \frac{n}{q+1},  \frac{nq-3q
\log _q n }{q-1} \}$. In particular, we have 
\[
    \liminf_{n\rightarrow\infty} \frac{\log_{q}\mathfrak{m}_n}{n}\geq \frac{q}{q-1}.
    \]
\end{thm}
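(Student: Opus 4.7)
The plan is to use Theorem \ref{thm:AKT} to reduce the problem to a statement solely about $k\in\SL_n(\mathfrak o;\mathfrak p)$, and then construct small test vectors via a pigeonhole/counting argument. First I would take any $g\in G$ with $\bN(L_g)>0$ and write $g=ak\gamma$ via Theorem \ref{thm:AKT}. Since $\gamma\in\Gamma$ permutes $\F[t]^n\setminus\{0\}$ and $|\det a|=1$, a direct computation gives $|L_g(\mathbf x)|=\prod_l|(k\mathbf x)_l|$, so $\bN(L_g)$ depends only on $k$. The task then becomes: for every $k\in\SL_n(\mathfrak o;\mathfrak p)$ with $L_k$ nonvanishing on $\F[t]^n\setminus\{0\}$, exhibit $\mathbf z\ne 0$ with $\prod_l|(k\mathbf z)_l|\le q^{-B}$ for the required $B$.

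The elementary choice $\mathbf z=e_i$ yields $\prod_l|(ke_i)_l|=\prod_l|k_{l,i}|\le q^{-(n-1)}$ since $k_{l,i}\in\mathfrak p$ for $l\ne i$, recovering $\log_q\mathfrak m_n\ge n-1$. To surpass this I would enlarge the search set to the $\F$-vector space $V_d=\{\mathbf x\in\F[t]^n:\deg x_j\le d\}$ of dimension $n(d+1)$ and apply a pigeonhole to the $\F$-linear map $\mathbf x\mapsto((k\mathbf x)_l\bmod\mathfrak p^{s_l+1})_l$: whenever $\sum_l(d+s_l+1)<n(d+1)$, one obtains nonzero $\mathbf z\in V_d$ with $|(k\mathbf z)_l|\le q^{-s_l-1}$ for every $l$. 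Sharper conclusions come by replacing the naive count with the valuation statistics on the image $\F$-subspace $W_l=\{(k\mathbf x)_l:\mathbf x\in V_d\}$: a generic $r$-dimensional $\F$-subspace of $\K$ contained in $\{|y|\le q^c\}$ has nonzero elements whose valuations sum to $(q^r-1)(\tfrac{1}{q-1}-c)-r$, so its average valuation tends to $\tfrac{1}{q-1}-c$ as $r\to\infty$.

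The first bound $n-2+n/(q+1)$ should arise by taking $d=0$ and averaging over test vectors on $\F$-projective lines in $\mathbb P^{n-1}(\F)$: among the $q+1$ points of each such line, the residues modulo $\mathfrak p^2\cong\F$ must collide, producing an extra $q^{-1}$ cancellation whose amortization over the projective geometry yields the factor $n/(q+1)$. The asymptotic bound $(nq-3q\log_q n)/(q-1)$ should arise by allowing $d\asymp\log_q n$ and combining the trivial bound ($-1$ per coordinate) with the averaged valuation gain; the constant $q/(q-1)=\sum_{j\ge 0}q^{-j}$ emerges by adding the trivial unit per coordinate to the limiting mean valuation $\tfrac{1}{q-1}$ per coordinate, while the $3q\log_q n/(q-1)$ term absorbs the pigeonhole loss and the gap between average and minimum. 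The $\liminf$ statement then follows immediately from the second bound on letting $n\to\infty$.

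The main obstacle I anticipate is controlling nongeneric configurations of $k$: if any image subspace $W_l$ fails to realize the generic decreasing flag in $\K$, the valuation statistics degrade. Such configurations either force $\bN(L_g)=0$, so that $g$ is excluded from $\mathfrak m_n$, or impose additional algebraic structure giving a direct stronger bound; a case analysis organized by the dimension drops at each level of the flag should handle these configurations uniformly, though verifying that the estimates survive the worst-case degenerations will be the main technical work.
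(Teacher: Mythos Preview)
Your reduction via Theorem~\ref{thm:AKT} and the plan to produce test vectors by averaging over an $\F$-subspace match the paper's strategy. For the first bound, averaging over the $q+1$ points of the line through $\be_1,\be_2$ does work and is dual to what the paper actually does: the paper pigeonholes the $n$ \emph{rows} by the residue $b_{i2}b_{i1}^{-1}\bmod\mathfrak p\in\F\cup\{\infty\}$ and then uses the single test vector $\be_2-\theta\be_1$ for the value $\theta$ shared by the most rows. Either computation gives $n-2+\lceil n/(q+1)\rceil$.

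For the second bound, however, your search space is the wrong one and the argument would fail as written. If you average $-\log_q|L_k(\bx)|$ over nonzero $\bx\in V_d$ with $d\ge1$, the diagonal entries $k_{l,l}\equiv 1\pmod{\mathfrak p}$ force $|(k\bx)_l|=|x_l|$ whenever $\deg x_l=d$, so the mean of each $-\log_q|(k\bx)_l|$ is about $-d+\tfrac{1}{q-1}$ and the total over $l$ is \emph{negative}; enlarging $d$ only hurts, and even $V_0=\F^n$ yields only about $n/(q-1)$, not $nq/(q-1)$. The paper's search space is instead the $\F$-span of $\be_1,\ldots,\be_\ell$ with $\ell=\lceil\log_q n\rceil$: for each of the $n-\ell$ rows $l>\ell$ the diagonal entry $k_{l,l}$ never appears, so $(k\bx)_l\in\mathfrak p$ automatically (this is what actually supplies the ``trivial $-1$ per coordinate'', and only for those rows), and then summing over all nonzero $\bx$ in this $\ell$-dimensional space gives an additional mean gain of about $\tfrac{1}{q-1}$ per such row via the unconditional count $\#\{\bx:-\log_q|(k\bx)_l|\ge r+1\}\ge q^{\ell-r}-1$. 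No genericity hypothesis is needed at any point, so your anticipated case analysis of degenerate flags is unnecessary.
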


For the upper bound we estimate $\textup{disc-}\mathfrak m_n$ using algebraic number theory and arithmetic geometry of curves over finite fields. After reviewing some basic facts of function fields, we
reduce the calculation of $\textup{disc-}\mathfrak m_n$ to that of the minimal genus of degree
$n$ extensions of $\F(t)$ split at $\infty $. Then we calculate precisely when the minimal genus is equal to $0$ or $1$ using  results on counting rational points of curves. This is the main reason that we have precise values of  $\mathfrak m_n$ for small $n$. 

\begin{thm}\label{thm;algebraic}
  We have  $
 \textup{disc-}\mathfrak{m}_n=\begin{cases}
  q^{n-1} & 2\le n\le q+1\\
  q^n &  q+2 \le n \le N_q
 \end{cases}
$ and $\textup{disc-}\mathfrak{m}_n\ge q^{n+1} $  for $n>N_q$.
\end{thm}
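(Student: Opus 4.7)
The strategy is to translate the problem into the geometry of covers of $\mathbb{P}^1_\F$. Let $k/\F(t)$ be a separable degree-$n$ extension split at $\infty$, let $X$ be the smooth projective model of $k$ over $\F$, and let $\pi \colon X \to \mathbb{P}^1_\F$ be the corresponding degree-$n$ morphism. The splitness hypothesis forces $\pi$ to be unramified over $\infty$, so the different divisor is supported on the affine locus and its pushforward coincides with $\mathrm{div}(d_k)$. Riemann--Hurwitz then gives $\deg(d_k) = 2g_X - 2 + 2n$, hence
\[
|d_k|^{1/2} = q^{g_X + n - 1}.
\]
Thus $\textup{disc-}\mathfrak{m}_n = q^{g_n + n - 1}$, where $g_n$ denotes the minimum genus of a smooth projective curve over $\F$ admitting a degree-$n$ cover of $\mathbb{P}^1$ whose fiber over some rational point consists of $n$ distinct rational points. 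The theorem reduces to identifying $g_n = 0 \iff n \leq q+1$, $g_n = 1 \iff q+2 \leq n \leq N_q$, and $g_n \geq 2$ otherwise.

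The genus-zero step is elementary: $g_n = 0$ forces $X \cong \mathbb{P}^1$, and the splitting condition demands at least $n$ distinct rational points on $\mathbb{P}^1_\F$; conversely, any such $n$-tuple arises as the fiber over a point of a suitable degree-$n$ rational function (by Riemann--Roch on $\mathbb{P}^1$). Since $|\mathbb{P}^1(\F)| = q+1$, this gives $g_n = 0 \iff n \leq q+1$.

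The genus-one case is the heart of the argument. Given $n$ distinct rational points $P_1, \ldots, P_n$ on a genus-1 curve $X/\F$, set $D = \sum_i P_i$; one seeks $\phi \in L(D)$ whose pole divisor is exactly $D$, so that $\phi \colon X \to \mathbb{P}^1$ provides the desired cover with totally split fiber over $\infty$. The obstruction is that $\phi$ must avoid each hyperplane $L(D - P_i) \subset L(D)$, and a careful inclusion--exclusion using $\dim L(D - \sum_{i \in S}P_i) = \max(n - |S|, 1)$ (by Riemann--Roch) computes
\[
\Bigl|\bigcup_{i=1}^n L(D - P_i)\Bigr| = q^n - (q-1)^n + (-1)^{n+1}(q-1),
\]
which equals $|L(D)| = q^n$ iff $(q-1)^{n-1} = (-1)^{n+1}$. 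For $q \geq 3$ this fails, so the existence of the cover reduces to the existence of a genus-1 curve with $|X(\F)| \geq n$; the maximum of $|X(\F)|$ over elliptic curves is $q+1+\lfloor 2\sqrt q\rfloor$ by Hasse--Weil, dropping to $q+\lfloor 2\sqrt q\rfloor$ in the Serre regime (via Waterhouse's classification of admissible Weil numbers for elliptic curves over finite fields). For $q = 2$ the inclusion--exclusion obstruction excludes all odd $n$, and combined with $|E(\F)| \leq 5$ this yields $N_2 = 4$.

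Finally, for $n > N_q$ both $g_n = 0$ and $g_n = 1$ are ruled out, so $g_n \geq 2$ and $\textup{disc-}\mathfrak{m}_n \geq q^{n+1}$. The main obstacle is the genus-one step---in particular, uniformly combining the Weil--Serre--Waterhouse bound on $|E(\F)|$ with the inclusion-exclusion obstruction to reproduce all three cases of the formula for $N_q$. The remaining steps, notably the Riemann--Hurwitz translation and the genus-zero construction, are essentially routine once the framework is in place.
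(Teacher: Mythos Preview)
Your proposal is correct, and the overall architecture matches the paper's: reduce $\textup{disc-}\mathfrak m_n$ to $q^{n-1+g_{\min}}$ via Riemann--Hurwitz (the paper's Proposition~\ref{prop;main}), then determine for which $n$ the minimal genus is $0$ or $1$ using the Waterhouse--Serre value of the maximal $\#E(\F)$.

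The genuine difference is in how you handle the genus-one construction. The paper (Lemma~\ref{lem;g=1} and Proposition~\ref{thm;q=2}) builds the required $t$ inductively: starting from functions $x_{ij}$ with $(x_{ij})_\infty = D_{v_i}+D_{v_j}$, it adds one pole at a time by adjusting a leading Laurent coefficient at a fixed place $v_1$, invoking $q>2$ to avoid the two-term cancellation $ca_{-1}+b_{-1}=0$, and then treats $q=2$ by a separate ad hoc argument (the $n=5$ case is ruled out by showing that the four residues at $v_1$ are forced to sum to zero in $\F_2$). Your route is instead a global count: inclusion--exclusion on the hyperplanes $L(D-P_i)\subset L(D)$ gives $\bigl|L(D)\setminus\bigcup_i L(D-P_i)\bigr|=(q-1)^n+(-1)^n(q-1)$, which is positive except precisely when $q=2$ and $n$ is odd. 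This is more uniform---the $q=2$ obstruction and the generic existence for $q\ge 3$ fall out of a single formula rather than from two separate arguments---and it makes transparent that for $q=2$, $n=5$ the failure is exact (the count is zero), not merely that one particular construction breaks. The paper's argument, on the other hand, is fully constructive: it hands you an explicit $t$, whereas your count only certifies existence.
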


It is possible to give an  upper bound of $\text{disc-}\mathfrak m_n$ by explicitly constructing extensions using class filed theory. We will not study this direction in details to avoid technical complexity.

We say $L$ has bounded type if 
$\bN (L)>0$.
If 
$L$ is proportional to a form  in (\ref{eq;rational}) with $\alpha_1, \ldots, \alpha_n$
an $\F(t)$-basis of $k$, then $L$ has bounded type. For $n\ge 3$, it is an open question whether the converse holds. 
The forms in (\ref{eq;rational}) also  correspond to 
periodic $A$-orbits 
in the finite volume homogeneous space $G/\Gamma$. Here an orbit $Ag\Gamma$ is periodic 
in $G/\Gamma$ means $\textup{Stab}_G(g\Gamma)$ is a lattice of $A$, which is equivalent 
to saying that $Ag\Gamma$ is compact. 

\begin{thm}\label{thm;equivalent}
Suppose $g\in G$. The followings are equivalent:
\begin{enumerate}[label=\textup{(\roman*)}]
    \item $L_g$ is proportional  to (\ref{eq;rational}) for a basis
    $\alpha_1, \ldots, \alpha_n$ of  a degree $n$ extension $k/\F(t)$ split at $\infty$. 
    \item $Ag\Gamma$ is periodic  in $G/\Gamma$. 
\end{enumerate}
\end{thm}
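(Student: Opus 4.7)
The plan is to establish both implications via Dirichlet's unit theorem for function fields together with a structural analysis of the commutative $\F(t)$-subalgebra of $\Mat_n(\F(t))$ spanned by the conjugate of the stabilizer.

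For (i)$\Rightarrow$(ii), the hypothesis gives $L_g = c \cdot L_{g_0}$ with $g_0 = (\sigma_i(\alpha_j))$. Unique factorization in $\K[x_1, \ldots, x_n]$ forces $g = P D g_0$ for a permutation matrix $P$ and diagonal $D$, so $\Stab_A(g\Gamma)$ is conjugate in $A$ to $\Stab_A(g_0\Gamma)$. After replacing $\alpha_1, \ldots, \alpha_n$ by an $\F[t]$-basis of the integral closure $\mathcal{O}_k$ of $\F[t]$ in $k$ (only perturbing the stabilizer within a commensurability class), for every $u \in \mathcal{O}_k^*$ with $N_{k/\F(t)}(u) = 1$ the multiplication-by-$u$ matrix $M_u$ lies in $\SL_n(\F[t]) = \Gamma$, and a direct computation gives $g_0 M_u g_0^{-1} = \diag(\sigma_1(u), \ldots, \sigma_n(u)) \in A$. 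Because $k$ is split at $\infty$, Dirichlet's unit theorem for function fields produces a rank $n-1$ group of such $u$'s modulo $\F^*$, yielding a cocompact lattice in $A$.

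For (ii)$\Rightarrow$(i), let $\Lambda_0 = \Stab_A(g\Gamma)$, and let $R$ be the $\F(t)$-subalgebra of $\Mat_n(\F(t))$ generated by $g^{-1}\Lambda_0 g \subseteq \Gamma$. Commutativity of $A$ makes $R$ commutative, and $gRg^{-1}$ lies in the diagonal subalgebra of $\Mat_n(\K)$. Since any $\K$-subalgebra of the diagonal $\K^n$ is again isomorphic to a product of copies of $\K$, the $\K$-algebra $R \otimes_{\F(t)} \K$ is étale, so $R \cong \prod_{i=1}^s k_i$ for finite separable extensions $k_i/\F(t)$, each of which must split at $\infty$. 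The key claim is $s = 1$. Step one: $\dim_{\F(t)} R = n$, for otherwise $gRg^{-1}$ is a proper subalgebra of the diagonal and there exist distinct coordinates $i\neq j$ with $a_i=a_j$ for every $a \in \Lambda_0$, contradicting that the valuation image of $\Lambda_0$ is a full-rank lattice in $\{v\in\Z^n : \sum v_k = 0\} \cong \Z^{n-1}$. Step two: with $\sum[k_i:\F(t)] = n$, Dirichlet gives that the norm-one units of any $\F[t]$-order in $\prod k_i$ have rank $n-s$; as $g^{-1}\Lambda_0 g$ injects into such a group and has rank $n-1$, we must have $s=1$. Hence $R = k$ is a single degree-$n$ extension split at $\infty$.

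To finish, $\F(t)^n$ is a faithful $k$-module of dimension $1$ (since $\dim_{\F(t)} \F(t)^n = n = [k:\F(t)]$). Choosing an $\F(t)$-basis $\alpha_1, \ldots, \alpha_n$ of $k$ produces, via the $k$-module isomorphism, a basis of $\F(t)^n$ and a change-of-basis $h \in \GL_n(\F(t))$ such that each $\alpha \in R$ acts as $hM_\alpha h^{-1}$, where $M_\alpha$ is multiplication in the basis $\alpha_j$. Since both $gh$ and $g_0 = (\sigma_i(\alpha_j))$ simultaneously diagonalize all $M_\alpha$, we get $gh = Dg_0$ for some diagonal $D \in \GL_n(\K)$ (after relabeling the embeddings), and hence $L_g = \det(D) \cdot L$ where $L$ is of type (\ref{eq;rational}) for the $\F(t)$-basis of $k$ transported by $h^{-1}$. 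The main obstacle is step one above: converting the dynamical hypothesis (full-rank $\Lambda_0$) into the algebraic identity $\dim R = n$ is the hinge that lets Dirichlet's rank bound pin down $s=1$.
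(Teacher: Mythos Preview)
Your argument is correct, and for (i)$\Rightarrow$(ii) it is essentially the paper's proof via the function-field Dirichlet unit theorem (your passage to an integral basis of $\mathcal{O}_k$ is a cleanup the paper leaves implicit).

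For (ii)$\Rightarrow$(i) you take a genuinely different route. The paper does not build the algebra $R$ at all: instead it selects a \emph{single} element $a=\diag(a_1,\dots,a_n)\in\Stab_A(g\Gamma)$ whose valuation vector is generic enough that $\prod_{i\in I}|a_i|\neq\prod_{j\in J}|a_j|$ for all disjoint nonempty $I,J\subset\{1,\dots,n\}$ (possible because the log image of the stabilizer has finite index in $\{m\in\Z^n:\sum m_i=0\}$). Writing $a=g\gamma g^{-1}$ with $\gamma\in\Gamma$, any nontrivial factorization of the characteristic polynomial of $\gamma$ over $\F(t)$ would produce monic factors in $\F[t][x]$ with constant terms in $\F^*$, forcing two disjoint partial products of the $|a_i|$ to equal $1$ --- a contradiction. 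Hence $k=\F(t,a_1)$ is already a degree-$n$ extension split at $\infty$, and the basis $(\alpha_j)$ is read off from a left eigenvector of $\gamma$. Your approach replaces this single-element irreducibility trick by a global rank count: $\dim_{\F(t)}R=n$ from the lattice condition, then $s=1$ from a \emph{second} invocation of Dirichlet bounding the unit rank of an order in $\prod k_i$ by $n-s$. The paper's argument is shorter and avoids Dirichlet in this direction; yours is more structural, makes the two directions pleasingly symmetric, and would generalize more naturally to describing the closure of $Ag\Gamma$ via the subalgebra $R$ when the orbit is not periodic.
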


The real  case of Theorem \ref{thm;equivalent}
is well-known and our  proof is the same.
Theorem \ref{thm;equivalent} allows one  to use homogeneous dynamics to 
study isolation of rational forms as in \cite{LW}.
Other related questions 
  were studied recently by several authors, see e.g.~\cite{Erez}\cite{Aranov}\cite{Paulin}\cite{Bagshaw-Kerr}.

\subsection*{Acknowledgments}
The authors are thankful to Noy Aranov for carefully reading an initial version of this article as well as for helpful bibliographical suggestions.

\section{Geometry of numbers over function fields}
We  develop a  reduction theory in Proposition \ref{lem;good basis}
which gives the decomposition of $G$ in Theorem \ref{thm:AKT}.
This allows us to give a lower bound of $\mathfrak m_n$. 
We write $R=\F[t]$ to simplify the notation and fix $n\ge 2$.

We consider $\K^n$ as column vectors over $\K$ and endow a norm
$\|\cdot\|$ on it defined by 
\[
		\Vert (v_1,v_2,\ldots,v_n)^{\mathrm{tr}}\Vert= \max\{|v_1|, |v_2|,\ldots,|v_n|\}. 
	\] 
The geometry of numbers over function fields considers discrete and cocompact $R$-modules $\Lambda$ in $\K^n$.  Since $R$ is a principle ideal domain we know that all these $\Lambda$ are free $R$-modules of rank $n$. Let $B_r$ be the closed ball of radius $r$ in $\K^n$, i.e.
\[
B_r=\{\bv\in \K^n : \|\bv\|\le  r \}.
\]
For $1\le i\le n$, the $i$-th minimum of $\Lambda$ is
	\[
		\lambda_i(\Lambda) =\inf\{r>0:  \dim\mathrm{span}_\K (\Lambda \cap B_r)\ge i \}.
	\]

Let  $\Lambda$  be a discrete and cocompact  $R$-module in $\K^n$.
We fix  $\Lambda $ and  write $\lambda_i$ for $\lambda_i(\Lambda)$ to simplify the notation. We also take $\lambda_0=0$ for convenience. 
We first prove several auxiliary lemmas of Proposition \ref{lem;good basis}. 
These lemmas are standard (see e.g. \cite{Mahler} or \cite{Aranov}) and  we give proof to make the paper self-contained. 
The following lemma follows directly from the definition of successive minima and will be used several times in our discussions below. 

\begin{lem}\label{lem;rank}
 Suppose $1\le j\le n$ and $\lambda_{j-1}\le r<\lambda_j$. Then 
\[
\mathrm{span}_\K (\Lambda \cap B_r)= \mathrm{span}_\K (\Lambda \cap B_{\lambda_{j-1}}).
\]
In particular, the two spaces have the same dimension.
\end{lem}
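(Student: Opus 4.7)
The plan is to sandwich both spans between dimension $j-1$ from above and from below, forcing them to be equal. First observe the trivial inclusion $\mathrm{span}_\K(\Lambda\cap B_{\lambda_{j-1}})\subseteq\mathrm{span}_\K(\Lambda\cap B_r)$, which follows from $\lambda_{j-1}\le r$ and hence $B_{\lambda_{j-1}}\subseteq B_r$. So it suffices to show both spans have dimension exactly $j-1$, since an inclusion of $\K$-subspaces of equal finite dimension is an equality.

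For the upper bound, I would unfold the definition of $\lambda_j$ directly. Since $\lambda_j$ is the infimum of those $s>0$ for which $\dim\mathrm{span}_\K(\Lambda\cap B_s)\ge j$, and $r<\lambda_j$, the radius $r$ is not in this set, giving $\dim\mathrm{span}_\K(\Lambda\cap B_r)\le j-1$.

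For the lower bound $\dim\mathrm{span}_\K(\Lambda\cap B_{\lambda_{j-1}})\ge j-1$ (vacuous when $j=1$ since $\lambda_0=0$), I would argue that the infimum defining $\lambda_{j-1}$ is actually attained. Using that $\Lambda$ is discrete in $\K^n$, the set $\Lambda\cap B_s$ is finite for every $s>0$, so one can select $j-1$ linearly independent vectors $\bv_1,\ldots,\bv_{j-1}\in\Lambda$ that minimize $\max_{1\le i\le j-1}\|\bv_i\|$ among all such $(j-1)$-tuples. By the definition of $\lambda_{j-1}$, this minimum value equals $\lambda_{j-1}$, so $\bv_1,\ldots,\bv_{j-1}\in\Lambda\cap B_{\lambda_{j-1}}$, which yields $\dim\mathrm{span}_\K(\Lambda\cap B_{\lambda_{j-1}})\ge j-1$.

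Combining the two bounds with the trivial inclusion gives $\dim\mathrm{span}_\K(\Lambda\cap B_r)=\dim\mathrm{span}_\K(\Lambda\cap B_{\lambda_{j-1}})=j-1$ and hence the desired equality of spans. I do not anticipate a serious obstacle; the only subtle point is verifying that the infimum in the definition of $\lambda_{j-1}$ is achieved, which is a standard consequence of the discreteness of $\Lambda$ (and is also immediate from the fact that nonzero norms lie in the discrete set $q^{\Z}$).
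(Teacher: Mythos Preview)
Your proof is correct and is precisely the direct unfolding of the definition of successive minima that the paper has in mind; indeed, the paper does not give a written proof of this lemma at all, merely stating that it ``follows directly from the definition of successive minima.'' Your argument supplies exactly those details, and the only nontrivial point---that the infimum defining $\lambda_{j-1}$ is attained---is handled correctly via discreteness (equivalently, via the fact that $\|\cdot\|$ takes values in $\{0\}\cup q^{\Z}$, so $B_r$ is locally constant in $r$).
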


\begin{lem}\label{lem;unique}
   Let  $\Lambda$  be a discrete and cocompact  $R$-module in $\K^n$. 
Let  $\bv_1,\ldots,\bv_n\in \Lambda$ be $\K$-linearly independent vectors  such that  $\|\bv_{i}\|\le \lambda_{i}$.  Then we have $\|\bv_{i}\|= \lambda_{i}$ for every $1\le i \le n$. 
\end{lem}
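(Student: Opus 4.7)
The plan is to compare the given vectors to a sorted rearrangement of themselves and let two matching chains of inequalities collapse to equalities. First I would record the following companion statement to Lemma \ref{lem;rank}: if $\bw_1,\ldots,\bw_n\in \Lambda$ are $\K$-linearly independent and $\|\bw_1\|\le \cdots \le \|\bw_n\|$, then $\lambda_i\le \|\bw_i\|$ for every $i$. This follows at once from the definition of successive minima, since $\bw_1,\ldots,\bw_i$ are $i$ linearly independent elements of $\Lambda\cap B_{\|\bw_i\|}$, so $\dim \spa_\K(\Lambda\cap B_{\|\bw_i\|})\ge i$, placing $\|\bw_i\|$ in the set whose infimum defines $\lambda_i$.

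Next I would apply this to a rearrangement of the hypothesized vectors. Let $\sigma$ be a permutation of $\{1,\ldots,n\}$ with $\|\bv_{\sigma(1)}\|\le \cdots\le \|\bv_{\sigma(n)}\|$. The companion inequality gives $\lambda_i \le \|\bv_{\sigma(i)}\|$, while the hypothesis of the lemma, applied with index $\sigma(i)$, gives $\|\bv_{\sigma(i)}\|\le \lambda_{\sigma(i)}$. Summing over $i$ and using that $\sigma$ is a bijection of $\{1,\ldots,n\}$ yields
\begin{equation*}
\sum_{i=1}^n \lambda_i \;\le\; \sum_{i=1}^n \|\bv_{\sigma(i)}\| \;\le\; \sum_{i=1}^n \lambda_{\sigma(i)} \;=\; \sum_{i=1}^n \lambda_i.
\end{equation*}
Both intermediate inequalities are therefore equalities, and since every term satisfies $\lambda_i\le \|\bv_{\sigma(i)}\|\le \lambda_{\sigma(i)}$, termwise equality must hold: $\|\bv_{\sigma(i)}\|=\lambda_{\sigma(i)}$ for all $i$. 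Because $\sigma$ is a permutation, this rewrites as $\|\bv_j\|=\lambda_j$ for every $1\le j\le n$, which is the conclusion of the lemma.

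I do not expect a genuine obstacle; the argument is essentially pigeonhole applied to the ordering against the indexing. The one point that has to be handled cleanly is the companion inequality of the first paragraph, because it is the only place where the definition of the successive minima is actually invoked and where one must use that the relevant infimum set is of the form $[\lambda_i,\infty)$.
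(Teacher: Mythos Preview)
Your proof is correct and takes a genuinely different route from the paper. The paper argues by contradiction: assuming $\|\bv_\ell\|<\lambda_\ell$ for the smallest such index $\ell$, it locates the smallest $j$ with $\lambda_j=\lambda_\ell$, picks a radius $r$ strictly between $\lambda_{j-1}$ and $\lambda_j$ that still captures $\bv_\ell$, and then invokes Lemma~\ref{lem;rank} to get a dimension contradiction from the $j$ independent vectors $\bv_1,\ldots,\bv_{j-1},\bv_\ell$ in $\Lambda\cap B_r$. Your argument instead establishes the elementary companion inequality $\lambda_i\le\|\bw_i\|$ for any sorted independent system directly from the definition, then squeezes the sums $\sum\lambda_i\le\sum\|\bv_{\sigma(i)}\|\le\sum\lambda_{\sigma(i)}=\sum\lambda_i$ to force termwise equality. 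Your approach is self-contained (it bypasses Lemma~\ref{lem;rank} entirely) and arguably cleaner; the paper's approach, on the other hand, isolates exactly where a hypothetical strict inequality would break the successive-minima structure, which dovetails with how Lemma~\ref{lem;rank} is reused in the subsequent proof of Lemma~\ref{lem;Succesive minima}.
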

\begin{proof}
    Suppose there exists some $i$ with $\|\bv_{i}\|< \lambda_{i}$. Let $\ell$ be  the smallest such $i$. Let $j$ be the smallest $i$ such that $\lambda_i=\lambda_\ell$.  
    We choose $r$ with  $\|\bv_\ell\|<r<\lambda_j$ and $r>\lambda_{{j-1}}$. Then 
$$\bv_1, \ldots, \bv_{j-1}, \bv_{\ell}\in B_r \cap \Lambda,$$
which contradicts Lemma \ref{lem;rank}.    
\end{proof}
 
From the general theory  of geometry of numbers (see e.g. \cite{kst}) we know that there are vectors $\bv_1, \ldots, \bv_n$ of $\Lambda$ with $\|\bv_i\|=\lambda_i$ such that 
the index  of $R$-span of $\{ \bv_1, \ldots, \bv_n \}$ in $\Lambda$ is bounded by an absolute constant independent of $\Lambda$. 
Here we strengthen this result.

\begin{lem}\cite[Lemma 1.18]{Aranov}\label{lem;Succesive minima}
Let  $\bv_1,\ldots,\bv_n\in \Lambda$ be $\K$-linearly independent vectors  such that  $\|\bv_{i}\|=\lambda_{i}$. 
 Then the 
  $R$-linear span of  $ \bv_1, \ldots, \bv_n $ is $  \Lambda$.
\end{lem}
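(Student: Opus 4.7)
The plan is to argue by contradiction, exploiting two ingredients specific to the function field setting: the additive decomposition $\K = R \oplus \mathfrak{p}$ (every Laurent series splits uniquely into its polynomial part and its $t^{-1}$-power-series tail), and the ultrametric inequality for $\|\cdot\|$.

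Let $\Lambda' = R\bv_1 + \cdots + R\bv_n \subseteq \Lambda$. Suppose for contradiction that there exists $\bw \in \Lambda \setminus \Lambda'$. Since $\bv_1,\ldots,\bv_n$ are a $\K$-basis of $\spa_\K \Lambda = \K^n$, we may write
\[
\bw = c_1 \bv_1 + \cdots + c_n \bv_n, \qquad c_i \in \K.
\]
Using $\K = R \oplus \mathfrak{p}$, write each $c_i = a_i + b_i$ with $a_i \in R$ and $b_i \in \mathfrak{p}$, and subtract $\sum a_i \bv_i \in \Lambda'$ from $\bw$. The resulting vector, still denoted $\bw$, lies in $\Lambda \setminus \Lambda'$ (so is nonzero) and has all coefficients in $\mathfrak{p}$, i.e.\ $|c_i| < 1$ for every $i$.

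Let $j$ be the largest index with $c_j \neq 0$. The ultrametric inequality gives
\[
\|\bw\| \le \max_{1 \le i \le j} |c_i|\, \|\bv_i\| \le \Bigl(\max_i |c_i|\Bigr)\, \lambda_j < \lambda_j,
\]
using $\|\bv_i\| = \lambda_i \le \lambda_j$ for $i \le j$ and $|c_i| < 1$. Now $\bv_1,\ldots,\bv_{j-1},\bw$ are $\K$-linearly independent since $c_j \neq 0$, and all lie in the ball $B_r$ with $r := \max(\lambda_{j-1}, \|\bw\|) < \lambda_j$. This contradicts Lemma~\ref{lem;rank}, as $\spa_\K(\Lambda \cap B_r)$ then has dimension at least $j$ while $r < \lambda_j$ forces $\spa_\K(\Lambda \cap B_r) = \spa_\K(\Lambda \cap B_{\lambda_{j-1}})$, a space of dimension at most $j-1$.

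The crucial step, and the only one not formal, is the passage $c_i \mapsto c_i \bmod R$ producing coefficients with $|c_i|<1$; this is where the non-archimedean setting is essential, since it is exactly the splitting $\K = R \oplus \mathfrak{p}$ that allows a clean reduction (with no constant to worry about), and then the ultrametric inequality converts $|c_i|<1$ directly into $\|\bw\| < \lambda_j$ without losing a factor. Over $\R$ the analogous step only yields $|c_i| \le \tfrac12$, which is why the classical result only bounds the index of $\Lambda'$ in $\Lambda$.
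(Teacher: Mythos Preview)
Your approach is the same as the paper's, and the key reduction via $\K = R \oplus \mathfrak p$ is carried out correctly. There is, however, one genuine gap in the final step: you assert that $r := \max(\lambda_{j-1}, \|\bw\|) < \lambda_j$ and that $\spa_\K(\Lambda \cap B_{\lambda_{j-1}})$ has dimension at most $j-1$, but both claims fail when $\lambda_{j-1} = \lambda_j$. Successive minima can repeat, so nothing in your setup prevents this; when it happens, Lemma~\ref{lem;rank} does not even apply (there is no $r$ with $\lambda_{j-1} \le r < \lambda_j$).

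The fix is precisely what the paper does: after finding the largest index $j$ with $c_j \neq 0$, pass to $m := \min\{\,i \le j : \lambda_i = \lambda_j\,\}$, so that $\lambda_{m-1} < \lambda_m = \lambda_j$. Since $\bw \notin \spa_\K\{\bv_1,\ldots,\bv_{j-1}\}$, it is certainly not in $\spa_\K\{\bv_1,\ldots,\bv_{m-1}\}$, so $\bv_1,\ldots,\bv_{m-1},\bw$ are $m$ linearly independent vectors of $\Lambda$ lying in $B_{r'}$ with $r' = \max(\lambda_{m-1}, \|\bw\|) < \lambda_m$, and now Lemma~\ref{lem;rank} gives the contradiction. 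With this adjustment your proof is complete and coincides with the paper's.
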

\begin{rem}    
    Following \cite{Aranov},  a basis of $\Lambda$ satisfying  Lemma \ref{lem;Succesive minima} is called  a basis of successive minima for $\Lambda$.
\end{rem}
\begin{proof}

Suppose the contrary, then there exists  $\bw \in \Lambda$ not in 
$\mathrm{span}_R \{\bv_1, \ldots, \bv_n \}$. 
 There are uniquely determined $a_1,\ldots, a_n\in \K$ not all in $R$ such that 
	\[
		\bw=a_1\bv_1+\cdots+a_n\bv_n.
	\]
 We can write $a_i= b_i+c_i$ for some uniquely determined $b_i\in R$ and $c_i\in t^{-1}\F[[t^{-1}]]$. In particular, we have $|c_i|<1$. Let $1\le \ell\le n$ be the largest number such that $c_{\ell}\neq 0$.
Then
	\[
		\bw'={c_1}\bv_1+\cdots+{c_{\ell}}\bv_{\ell} \in \Lambda\setminus \mathrm{span}_\K \{ \bv_1, \ldots, \bv_{{\ell}-1} \}.
	\]
Since the norm on $\K^{n}$ is ultrametric we have 
	\[
		\Vert \bw' \Vert \leq \max\{\Vert{c_1}\bv_1\Vert,\cdots,\Vert{c_{\ell}}\bv_{\ell}\Vert\}<\Vert\bv_{{\ell}}\Vert=\lambda_{{\ell}}.\nonumber
	\]
We claim that this is impossible. Let  
\[
j=\min\{i\le {\ell}: \lambda_i =\lambda_{\ell}\}. 
\] We choose $r$ with  $\|\bw'\|<r<\lambda_j$ and $r>\lambda_{{j-1}}$. Then 
$$\bv_1, \ldots, \bv_{j-1}, \bw'\in B_r \cap \Lambda,$$
which contradicts Lemma \ref{lem;rank}.
\end{proof}

We will further refine a given  basis of successive minima by linear operations. 
 For 
 $b\in \K$ we define its 
 integral part to be the unique element $[b]\in \F[t]$ such that $|b-[b]|< 1$.

 \begin{lem}\label{lem;basic}
Let $\bw_j=(b_{1j}, \ldots, b_{nj})^{\mathrm{tr}} \  (1\le j\le n)$ be a basis of successive minima for $\Lambda$. Fix ${\ell}\le n$ and choose $i$ so that 
$\|\bw_{\ell} \|= |b_{i{\ell}}|$. Given $s\neq {\ell}$ we define a new basis $\bv_1, \ldots, \bv_n$ of $\Lambda$  as follows: $\bv_j=\bw_j$ if $j\neq s$ and\[
\bv_s= \bw_s-[b_{is}b_{i{\ell}}^{-1}]\bw_{\ell}.
\] 
Then $\bv_1, \ldots, \bv_n$ is a basis of successive minima for $\Lambda$.
 \end{lem}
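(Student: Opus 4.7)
The plan is to first observe that $\bv_1,\ldots,\bv_n$ remains an $R$-basis of $\Lambda$. Indeed, the transformation $(\bw_1,\ldots,\bw_n)\mapsto(\bv_1,\ldots,\bv_n)$ is an elementary operation, subtracting an $R$-multiple of $\bw_\ell$ from $\bw_s$, and is thus invertible over $R$. In particular, the $\bv_j$ are $\K$-linearly independent in $\K^n$. By Lemma \ref{lem;unique}, it then suffices to verify the size bound $\|\bv_j\|\le\lambda_j$ for every $j$, since this will automatically force equality, and combined with the fact that the $\bv_j$ are an $R$-basis of $\Lambda$ this is precisely the definition of a basis of successive minima.

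For $j\ne s$ the bound is immediate, because $\bv_j=\bw_j$. The only nontrivial case is $j=s$, which I will handle by the ultrametric inequality
\[
\|\bv_s\|\le\max\bigl(\|\bw_s\|,\,|[b_{is}b_{i\ell}^{-1}]|\cdot\|\bw_\ell\|\bigr).
\]

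The key step will be the elementary estimate $|[c]|\le|c|$ for every $c\in\K$. If $|c|<1$, then any choice of $[c]\in\F[t]$ with $|c-[c]|<1$ must be $0$, since a nonzero element of $\F[t]$ has norm at least $1$, so the bound is trivial. If $|c|\ge 1$, then $|c-[c]|<1\le|c|$, and the ultrametric triangle inequality forces $|[c]|=|c|$. Applying this with $c=b_{is}b_{i\ell}^{-1}$ and using $\|\bw_\ell\|=|b_{i\ell}|$, the second term in the max is bounded by $|b_{is}|\le\|\bw_s\|=\lambda_s$, which yields $\|\bv_s\|\le\lambda_s$ as required.

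The whole argument is a routine verification, and I do not anticipate any serious obstacle. The only subtle point is the inequality $|[c]|\le|c|$, a pleasant feature of the non-archimedean absolute value on $\K$ that reflects why reduction theory over function fields turns out to be considerably cleaner than its archimedean counterpart; the same manipulation over $\R$ would introduce constants and make the outcome strictly weaker than the clean equality $\|\bv_j\|=\lambda_j$ obtained here.
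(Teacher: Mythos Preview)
Your proof is correct and follows essentially the same approach as the paper's. The only cosmetic difference is organizational: the paper splits into the cases $\lambda_s<\lambda_\ell$ (where $[b_{is}b_{i\ell}^{-1}]=0$) and $\lambda_s\ge\lambda_\ell$, whereas you extract the inequality $|[c]|\le|c|$ once and apply it uniformly; and the paper invokes Lemma~\ref{lem;Succesive minima} to recover the $R$-basis property from the norm equalities, whereas you observe directly that an elementary column operation over $R$ preserves the basis.
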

\begin{proof}
In view of Lemma \ref{lem;Succesive minima},
it suffices to show $\|\bv_s\|=\lambda_s$ for the given  $s$. 
   If $\lambda_s<\lambda_{\ell}$, then \[
|b_{is}|\le \|\bw_s\|=\lambda_s<\lambda_{\ell}=|b_{i{\ell}}|
   \]
   In this case, $[b_{is}b_{i{\ell}}^{-1}]=0$ and $\bv_s=\bw_s$. So $\|\bv_s\|=\lambda_s$ holds trivially. 

   Suppose $\lambda_s\ge \lambda_{\ell}$. We have 
   \[
   \|[b_{is}b_{i{\ell}}^{-1}]\bw_{\ell}\|\le |b_{is}|\cdot |b_{i{\ell}}^{-1}|\cdot \|\bw_{\ell}\|\le \lambda_s \lambda_{\ell}^{-1}\lambda_{\ell}=\lambda_s.
   \]
   This implies 
   \[
   \|\bv_s\|\le \max \{ \|\bw_s \|, \|[b_{is}b_{i{\ell}}^{-1}]\bw_{\ell}\| \}
   \le \lambda_s.
   \]
In view of Lemma \ref{lem;unique} the above inequality forces $\|\bv_s\|=\lambda_s$.
\end{proof}
 
\begin{prop}\label{lem;good basis}
Let  $\Lambda$  be a discrete and cocompact  $R$-module in $\K^n$. 
	There exists a basis of successive minima  $\bv_1, \ldots, \bv_n$ for $\Lambda$
 and a permutation $\sigma$ of coordinates of $\K^n$  such that the coordinates of 
 $\sigma(\bv_j)=(c_{1j}, \ldots, c_{nj})^{\mathrm{tr}}$ satisfy 
 \begin{align}\label{eq;cij}
|c_{ij}|<  |c_{ii}|=\lambda_i\qquad \mbox{ for all }  i\neq j. 
 \end{align}
\end{prop}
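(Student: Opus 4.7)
The plan is to combine a non-archimedean Minkowski-type identity with iterated applications of Lemma \ref{lem;basic}. I start from any basis of successive minima $\bv_1,\ldots,\bv_n$ of $\Lambda$ (granted by Lemma \ref{lem;Succesive minima} and the general theory quoted from \cite{kst}), writing $\bv_j=(c_{1j},\ldots,c_{nj})^{\mathrm{tr}}$ and $M=(c_{ij})$.

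The preliminary identity I would invoke is $|\det M|=\prod_{j=1}^n\lambda_j$, a standard fact in the non-archimedean geometry of numbers (\cite{Mahler}, \cite{kst}). Expanding $\det M=\sum_{\sigma\in S_n}\mathrm{sgn}(\sigma)\prod_j c_{\sigma(j),j}$ and using the ultrametric estimate,
\[
\prod_{j=1}^n\lambda_j=|\det M|\le\max_{\sigma\in S_n}\prod_{j=1}^n|c_{\sigma(j),j}|\le\prod_{j=1}^n\lambda_j,
\]
since $|c_{ij}|\le\|\bv_j\|=\lambda_j$ for every $i,j$. Equality throughout forces some $\sigma\in S_n$ to satisfy $|c_{\sigma(j),j}|=\lambda_j$ for every $j$. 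Relabelling the coordinates of $\K^n$ via $\sigma^{-1}$, I may assume $\sigma=\mathrm{id}$, i.e.\ $|c_{jj}|=\lambda_j$ for all $j$.

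I then iterate Lemma \ref{lem;basic} on the off-diagonal entries: while some pair $(\ell,s)$ with $\ell\ne s$ satisfies $|c_{\ell s}|\ge\lambda_\ell$, replace $\bv_s$ by $\bv_s-[c_{\ell s}/c_{\ell\ell}]\bv_\ell$. By Lemma \ref{lem;basic}, this preserves the basis of successive minima while strictly reducing $|c_{\ell s}|$ to below $\lambda_\ell$. The ultrametric estimate
\[
|c_{ks}'|\le\max\bigl(|c_{ks}|,\,|[c_{\ell s}/c_{\ell\ell}]|\cdot|c_{k\ell}|\bigr)\le\max(|c_{ks}|,|c_{\ell s}|)\le\lambda_s
\]
for $k\ne\ell$, using $|c_{k\ell}|\le\lambda_\ell$ and $|[c_{\ell s}/c_{\ell\ell}]|\le|c_{\ell s}|/\lambda_\ell$, shows that no entry of column $s$ overshoots $\lambda_s=\|\bv_s\|$.

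The main technical obstacle is guaranteeing termination of this loop together with preservation of the diagonal condition $|c_{jj}|=\lambda_j$. Degenerate cases where $|c_{\ell s}|=\lambda_s$ and $|c_{s\ell}|=\lambda_\ell$ hold simultaneously can, via cancellation, drop $|c_{ss}|$ below $\lambda_s$ and shift the maximising position of $\bv_s$ off the diagonal; since column operations preserve $|\det M|=\prod\lambda_j$, the determinantal argument above re-supplies a compatible permutation $\sigma'$ after any such event. Termination itself reduces to exhibiting a monotone invariant---morally, the multiset of degrees of off-diagonal entries in a carefully chosen order---together with the discreteness of norms in $q^{\Z}$. At termination one has $|c_{\ell s}|<\lambda_\ell$ for every $\ell\ne s$, which combined with $|c_{\sigma(j),j}|=\lambda_j$ yields \eqref{eq;cij}.
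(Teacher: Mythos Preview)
Your argument has two weaknesses, one structural and one substantive.

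First, within this paper the identity $|\det M|=\prod_j\lambda_j$ is Corollary~\ref{cor;det}, proved \emph{using} Proposition~\ref{lem;good basis}. You avoid circularity only by importing the identity from \cite{Mahler} or \cite{kst}; that is legitimate, but you should flag it explicitly, since a reader following the paper's internal logic will see a loop.

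Second, and more seriously, you do not prove termination. You correctly observe that a reduction step $\bv_s\mapsto\bv_s-[c_{\ell s}/c_{\ell\ell}]\bv_\ell$ can destroy the diagonal condition $|c_{ss}|=\lambda_s$ when $|c_{\ell s}|=\lambda_s$ and $|c_{s\ell}|=\lambda_\ell$, and you propose to repair this by re-running the determinantal argument to choose a new permutation. But once the permutation changes, your ``monotone invariant'' (the multiset of off-diagonal degrees) is computed with respect to a different diagonal, and you give no reason why the combined process of reduce--repermute--reduce cannot cycle or increase. The phrase ``morally, the multiset of degrees \ldots\ in a carefully chosen order'' is precisely where the work lies, and you have not done it. Discreteness of $|\cdot|$ in $q^{\Z}$ alone does not suffice: each individual entry can only take finitely many values below $\lambda_s$, but the repermutation step reshuffles which entries count as off-diagonal.

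The paper's proof avoids this trap by a structured induction on $\ell$. At stage $\ell$ it first clears the entries $b_{i\ell}$ for $i<\ell$ by subtracting an integer combination of $\bw_1,\ldots,\bw_{\ell-1}$ from $\bw_\ell$ (solving a linear system over $\K$ and taking integral parts), then chooses the coordinate where $\|\bw_\ell\|$ is attained and transposes it into position $\ell$, and finally goes back and clears row $\ell$ in columns $j<\ell$ via Lemma~\ref{lem;basic}. The key point is that each of these sub-steps is checked to preserve the inequalities already established for indices $<\ell$, so nothing ever needs to be redone; termination is immediate from the induction. Your unordered iteration lacks exactly this ``no backsliding'' guarantee.
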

\begin{proof}
    We will show  by induction    that for all $1\le {\ell}\le n$ there  exist
    a basis of successive minima $\bv_1, \ldots, \bv_n$ 
    and  a permutation $\sigma$ of coordinates so that 
with  $\sigma(\bv_j)=(c_{1j}, \ldots, c_{nj})^{\mathrm{tr}}$
the followings hold:
    \begin{enumerate}[label=\textup{(\roman*)}] 
    \item  For each $1\le j\le {\ell}$ one has  
      $|c_{jj}|=\lambda_j $. 
	\item For each $1\le i\le {\ell}$ one has   $|c_{ij}|<  \lambda_i$ for all  $j\neq i$ and $j\le {\ell}$. 
    \end{enumerate}
The lemma follows from the case where ${\ell}=n$. 
 
   For ${\ell}=1$,  suppose $\bv_j=(b_{1j}, \ldots, b_{nj})^{\mathrm{tr}} \ (1\le j\le n) $ is a basis of successive minima 
    and $\|\bv_1\|= |b_{i1}|$. We take $\sigma$ to be the transposition of $1$ and 
    $i$ if $i\neq 1$ or identity otherwise. 
   Then it is clear that   (\rmnum{1}) and (\rmnum{2}) hold.
     
 Suppose for ${\ell}-1<n$ we can find a basis of  successive minima $\bw_1, \ldots, \bw_n$  and $\sigma$ so
    that (\rmnum{1}) and  (\rmnum{2})  hold
    for coordinates of $\sigma(\bw_j)=(b_{1j}, \ldots, b_{nj})^{\mathrm{tr}}$ with     $ {\ell}-1$   
    in place of ${\ell}$. 
   We show that we can go further to ${\ell}$. This induction process will complete the proof.

We claim that we can choose  $\bw_{\ell}$ so that $|b_{i{\ell}}|<\lambda_i$ for all $i<{\ell}$. To achieve this we shall replace  $\bw_{\ell}$ by 
$\bw_{\ell}- \sum_{j=1}^{{\ell}-1} a_j\bw_{j}$ where $a_j\in \F[t] $ will be determined later.  
There exists a unique  $(c_1, \ldots, c_{{\ell}-1})^{\mathrm{tr}}\in \K^{{\ell}-1}$ so that 
\[
\begin{pmatrix}
    b_{11} & \cdots & b_{1, {\ell}-1}\\
    \vdots & \ddots & \vdots \\
    b_{{\ell}-1,1} & \cdots & b_{{\ell}-1, {\ell}-1}
\end{pmatrix}
\begin{pmatrix}
    c_1 \\
    \vdots \\
    c_{{\ell}-1}
\end{pmatrix}
=\begin{pmatrix}
    b_{1,{\ell}}\\
    \vdots \\
    b_{{\ell}-1, {\ell}}
\end{pmatrix},
\]
since the determinant of the square matrix $B=(b_{i,j})_{i, j< {\ell}}$ has absolute value $\lambda_1\cdots \lambda_{{\ell}-1}>0$.  Let $a_i=[c_i]$ for $i<{\ell}$  and
write 
\begin{align}\label{eq;wk}
\sigma(\bw_{\ell})-\sum_{j=1}^{{\ell}-1} a_j \sigma(\bw_{j})=(b_{1{\ell}}', \ldots, b_{n{\ell}}')^{\mathrm{tr}}.
\end{align}
Then
\begin{align}\label{eq;claim}
|b_{i{\ell}}'|=|\sum_{s=1}^{{\ell}-1} b_{i s} (c_i-[c_i])|< \lambda_i,
\end{align}
since $|b_{is}|\le \lambda_i$ by induction hypothesis. 
Note that for $1\le j< {\ell}$ we have 
\[
|a_j|=|[c_j]|\le |c_j|\le \lambda_{\ell} \lambda^{-1}_{j}.
\]
It follows that \[
\|\bw_{\ell}- \sum_{j=1}^{{\ell}-1} a_j\bw_{j}\|\le \max \{  \|\bw_{\ell}\|, \|a_j\bw_j\|: 1\le j< {\ell} \}\le \lambda_{\ell}.
\]
In view of Lemma \ref{lem;unique} we have equality holds in the above inequality.
So  replacing 
$\bw_{\ell}$ by 
$\bw_{\ell}- \sum_{j=1}^{{\ell}-1} a_j\bw_{j}
$
does not change  the induction hypothesis and the claim holds by (\ref{eq;claim}).

 We  assume without loss of generality that  $\bw_{\ell}$ satisfies the claim above and
 choose $s\in \{\ell, \ldots, n \}$ so that $\| \bw_{\ell}\|= |b_{s{\ell}}|$.
Let $\tau$ be the transposition  of $s$ and ${\ell}$ if $s\neq {\ell}$ and identity otherwise. We take $\bv_j=\bw_{j}$ for $j\ge {\ell}$ and  
 \[
    \bv_j=\bw_j- [b_{sj}b_{s{\ell}}^{-1}] \bw_{\ell}.
j    \]
for $j< {\ell}$. 
In view of Lemma \ref{lem;basic} we know $\bv_1, \ldots, \bv_n$
is a basis of successive minima. 

By construction (\rmnum{1}) holds for 
 coordinates of $\tau \sigma(\bv_{\ell})$. When  $j<{\ell}$ and  $\lambda_j<\lambda_{\ell}$, we have 
  $\bv_j=\bw_j$. Hence (\rmnum{1}) holds for the $j$th 
coordinate of $\tau \sigma(\bv_j)$. When  $j<{\ell}$ and
    $\lambda_j=\lambda_{\ell}$, 
we have 
\begin{align}
    \label{eq;twice}
   |b_{sj} b_{s{\ell}}^{-1}|\le \lambda_j \lambda _{\ell}^{-1}=  1.  
\end{align}
 This together with  $|b_{j{\ell}}|<|b_{jj}|=\lambda_j$ of the  claim
  imply
 \[
|[b_{sj} b_{s{\ell}}^{-1}]b_{j{\ell}}|<\lambda_j.
\]
Hence
\[
|b_{jj}-[b_{sj}b_{s{\ell}}^{-1}]b_{j{\ell}}|=|b_{jj}|=\lambda_j,
\]
which is   (\rmnum{1}) for this $j$. 

For $i={\ell}$ and $j<{\ell}$, (\rmnum{2}) holds for coordinates of $\tau\sigma(\bv_j)$  by construction. For $i<{\ell}$ and $j={\ell}$  
the claim implies  (\rmnum{2}) holds.
For $i,j<{\ell}$ and  $i\neq j$ we have 
\[
|b_{ij}-[b_{sj}b_{s{\ell}}^{-1}]b_{i{\ell}}|\le \max\{ |b_{ij}|,|[b_{sj}b_{s{\ell}}^{-1}]b_{i{\ell}}|
\}<\lambda_i,
\]
where we use (\ref{eq;twice}), $  |b_{ij}|< \lambda_i$ and $|b_{i{\ell}}|<\lambda_i$.
This is (\rmnum{2}) for this $i$th component of $\tau\sigma(\bv_j)$.

\begin {comment}

  \red{I stopped here.}
    
		
	Suppose for $1\leq k-1\leq n-1$, we have got $\bv_1,\ldots,\bv_{k-1}$ and permutation $\sigma$ meeting the property above. For $k$, let $\bv_{k}=\bw_{k}$ at the beginning, of course $\bv_1,\ldots,\bv_{k}\in R$-$[\bw_1,\ldots,\bw_{k}]$ linearly independent. For convenience we may ignore permutation $\sigma$ in the progress of induction and see $\bv_j=(c_{1j}, \ldots, c_{nj})^{\mathrm{tr}}$ for $1\leq j\leq k$ through permutation $\sigma$. Then for $\bv=(c_{1},\ldots,c_{n})^{\mathrm{tr}}$ with $\Vert \bv \Vert=\lambda_{k}$ let
    \[
        B(\bv)=\{j:1\leq j\leq k-1,|c_{j}|=\lambda_{k}\}.
    \] 
	$B(\bv_{k})$ may be empty, if so we jump to the next step. Otherwise let $ j=\min B(\bv_{k})$. Notice $|c_{j j}|=\lambda_j\neq 0$, $c_{j j}$ is invertible in $\K$. So assume that 
    \[
        c_{j k}=ac_{j j}+bc_{j j}
    \]
    with $a\in R$ and $|b|<1$. So 
    \[
        |bc_{j j}|<\lambda_j\leq \lambda_{k}. 
    \]
    And by $|c_{j k}|=\Vert \bv_{k} \Vert=\lambda_{k}$ we have:
    \[
        |ac_{j j}|=\lambda_{k} \text{ and } |a|=|c_{j k}|/|c_{j j}|=\lambda_{k}/\lambda_j.
    \]
    Let $\bv'_{k}=\bv_{k}-a\bv_{j}$. Suppose $\bv'_k=(c'_{1k}, \ldots, c'_{nk})^{\mathrm{tr}}$. Then 
    \begin{equation}
        \Vert \bv'_{k}\Vert\leq \max \{\Vert \bv_{k}\Vert,\Vert a\bv_{j}\Vert \}=\lambda_{k}.
    \end{equation}
    Trivially $\bv_1,\ldots,\bv_{k-1},\bv'_{k}\in R[\bw_1,\ldots,\bw_{k}]$ linearly independent. Thus by the definition of $\lambda_{k}$ and (3.1), we have 
    \[
        \Vert \bv'_{k}\Vert=\lambda_{k}.
    \]  	
    Thus we can repeat the step above again replacing $\bv_{k}$ with $\bv'_{k}$ to get a new $\bv$ which satisfies $\bv_1,\ldots,\bv_{k-1},\bv\in R[\bw_1,\ldots,\bw_{k}]$ linearly independent and $\Vert \bv\Vert=\lambda_{k}$. And I will show that $\min B(\bv'_{k})> \min B(\bv_{k})$(if $B=\emptyset$, set $\min B =\infty$). 
		
	For $j$,
    \[
        |c'_{j k}|=|bc_{j j}|<\lambda_j\leq \lambda_{k}
    \]
    So $j\notin B(\bv'_{k})$.
		
	For $s<j$, we know $\lambda_s<\lambda_j$. By $s\notin B(\bv_{k})$ and $|c_{s j}|<\lambda_{s}$, we have
    \[
        |c'_{s k}|=|c_{s k}-ac_{s j}|\leq \max\{|c_{s k}|,|ac_{s j}|\}<\max\{\lambda_{k},\frac{\lambda_{k}}{\lambda_j}\lambda_{s}\}=\lambda_{k}
    \]
    So $s\notin B(\bv'_{k})$.
	
    As a consequence we know that $\min B(\bv'_{k})> \min B(\bv_{k})$. Notice $B(\bv_{k})$ is contained in a finite set, so by repeating the step above in finite times we can get a $\bv_{k}$ satisfies $\bv_1,\ldots,\bv_{k-1},\bv_{k}\in R[\bw_1,\ldots,\bw_{k}]$ linearly independent with $\Vert \bv_{k}\Vert=\lambda_{k}$ and $B(\bv_{k})=\emptyset$. Thus there exits $k-1<j\leq n$ such that $c_{jk}=\lambda_{k}$. Through a permutation we may change $j$ to $k$, and for convenience, we just assume $j=k$ with the permutation just in mind. Here we furthermore have $|c_{j k}|<\lambda_{k}$ for $1\leq j< k$, and denote this property by ($\star$).
		
	Then I will change $\bv_{k}$ so that $\bv_{k}$ can satisfy (2). Define $C(\bv_{k})=(m,s)$ where $m,s$ are defined as follow:
	\[
		m=\max\{|c_{j k}|:1\leq j\leq k-1, |c_{j k}|\geq \lambda_j\}\cup\{0\}  
    \]
    and 
    \[
        s=\min\{j:1\leq j\leq k-1,|c_{j k}|=m, |c_{j k}|\geq \lambda_j\}\cup\{n+1\}.
    \]
     If $C(\bv_{k})=(0,n+1)$, jump to the next step. Otherwise like before suppose 
     \[
        c_{s k}=ac_{s s}+bc_{s s}
     \]
    where $a\in R$ and $|b|<1$. By ($\star$), $\lambda_{k}>|c_{s k}|=m\geq \lambda_s$. And we have 
    \[
        a\neq 0 \text{ and }|a|=\frac{m}{\lambda_{s}}<\frac{\lambda_{k}}{\lambda_{s}}.
    \]
    Let $\bv'_{k}=\bv_{k}-a\bv_{s}$, and suppose $\bv'_k=(c'_{1k}, \ldots, c'_{nk})^{\mathrm{tr}}$ then
    \[
        \Vert \bv'_{k}\Vert=\Vert \bv_{k}-a\bv_{s}\Vert\leq \max\{\Vert \bv_{k}\Vert,|a|\Vert \bv_{s}\Vert\}\leq \lambda_{k}.
    \]
    And by $|a|<\frac{\lambda_{k}}{\lambda_{s}}$, 
    \[
        |ac_{ks}|<\frac{\lambda_{k}}{\lambda_{s}} \lambda_{s}=\lambda_{k}=|c_{kk}|.
    \]
    So
    \[
        |c'_{kk}|=|c_{kk}-ac_{ks}|=|c_{kk}|=\lambda_{k}.
    \]
    So (1) holds. And for $1\leq j\leq k-1$ 
    \[
        |c'_{j k}|=|c_{jk}-ac_{js}|\leq \max\{ |c_{jk}|,|ac_{js}|\}<\lambda_{k}.
    \]
    So ($\star$) holds. Then I'm going to calculate $C(\bv'_{k})=(m',s')$. We may divide the first $k$ rows into 4 parts.

    For j satisfies $|c_{j k}|>m$, by definition of m, we have $m<|c_{j k}|< \lambda_j$ and $j\neq s$. Thus 
    \[
    |c'_{j k}|=|c_{j k}-ac_{j s}|\leq\max\{|c_{j k}|,\frac{m}{\lambda_s}\lambda_{s}\}<\lambda_j,
    \]
    which means j isn't in the set we calculate $m'$.

    For j=s, we have
    \[
    |c'_{s k}|=|c_{s k}-ac_{s s}|=|bc_{s s}|<\lambda_{s},
    \]
    similarly s isn't in the set we calculate $m'$.

    And for j isn't contained in the above 2 parts, we know either $j<s$ and $|c_{j k}|<m$ or $j>s$ and $|c_{j k}|\leq m$. 
    
    For $j<s$ and $|c_{j k}|<m$, by $\lambda_{j}\leq \lambda_{s}$ and $|c_{j s}|<\lambda_{j}$ we have 
    \[
    |c'_{j k}|=|c_{j k}-ac_{j s}|\leq \max\{|c_{j k}|,\frac{m}{\lambda_s}(\lambda_{j}-1)\}<\max\{m,\frac{m}{\lambda_s}\lambda_{s}\}=m.
    \]
    
    For $j>s$ and $|c_{j k}|\leq m$, we have
    \[
     |c'_{j k}|=|c_{j k}-ac_{j s}|\leq \max\{|c_{j k}|,\frac{m}{\lambda_s}\lambda_{s}\}\leq m.
    \]
	As a result, corresponding to the last two parts we have either $m'<m$ or $m'=m$ and $s'>s$, thus by finitely repeating the step above we can get $\bv_{k}$ satisfies $|c_{j k}|<\lambda_j$, for any $1\leq j\leq k$.
		
	Finally we change $\bv_1,\cdots,\bv_k$ to get the left part of (2).
		
	If $|c_{k j}|\geq \lambda_{k}$ for $1\leq j\leq k$, by $|c_{k j}|\leq\lambda_j\leq \lambda_{k}$ we have $|c_{k j}|=\lambda_{j}= \lambda_{k}$. Suppose 
    \[
        c_{k j}=ac_{k k}+bc_{k k}
    \]
    where $a\in R$ and $|b|<1$. Like before we have $|a|=1$. Let $\bv'_{j}=\bv_{j}-a\bv_{k}$. So
    \[
        \Vert \bv'_{j}\Vert\leq \max\{\Vert \bv_{j}\Vert ,\Vert \bv_{k}\Vert \}=\lambda_{j}.
    \]
    Like before this means (1) still holds. And
	\begin{equation}
		|c'_{s j}|=|c_{s j}-ac_{s k}|\left\{ \begin{array}{ll}
			<\lambda_{s} &s\neq j<k\\
			=\lambda_j &s=j \\
			= |bc_{k k}|<\lambda_{k} &s=k
		\end{array}\right.\nonumber
	\end{equation}
    In the first two condition we use $|c_{s k}|<\lambda_{s}$. Thus (1) still hold, and by finitely steps we can get (2). Which means we can get $\bv_1,\cdots,\bv_{k}$ satisfying (1)(2). 
    
    Finally by induction and let $\{c_{j}\}_{j=1}^{n}$ and $\sigma$ be the corresponding vectors and permutation that (1)(2) hold when k=n, we complete the proof.

    \end{comment}
\end{proof}

Let $R^n$ be the standard cocompact free $R$-module of $\K^n$ consists of vectors with entries in $R$.  
Given $g\in \GL_{n} (\K)$, let $\Lambda_g=g{R}^{n}=\{g\bv: \bv\in R^n \}$.  Since $\det (g)\neq 0$, we know $\Lambda_g$ is a discrete and cocompact $R$-modules in $\K^n$. Conversely, any discrete and cocompact $R$-module of $\K^n$ has this form. 

\begin{cor}\label{cor;det}
    Suppose $g\in \GL_{n} (\K)$ and $\lambda_{i}=\lambda_{i}(\Lambda_g)$. Then ${\lambda}_{1}{\lambda}_{2}\cdots{\lambda}_{n}=\lvert\det (g)\rvert$.
\end{cor}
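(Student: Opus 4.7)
The plan is to reduce the determinant computation to an almost-triangular matrix whose determinant is visibly given by its diagonal, using the refined basis produced by Proposition \ref{lem;good basis}.

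First I would apply Proposition \ref{lem;good basis} to $\Lambda_g$ to produce a basis of successive minima $\bv_1,\ldots,\bv_n$ and a coordinate permutation $\sigma$ such that the matrix $C=(c_{ij})$ defined by $\sigma(\bv_j)=(c_{1j},\ldots,c_{nj})^{\mathrm{tr}}$ satisfies $|c_{ii}|=\lambda_i$ and $|c_{ij}|<\lambda_i$ for $i\neq j$. Let $M$ be the matrix with columns $\bv_1,\ldots,\bv_n$. By Lemma \ref{lem;Succesive minima}, the $R$-span of the columns of $M$ equals $\Lambda_g=gR^n$, so there exists $U\in\GL_n(R)$ with $M=gU$. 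Since $R=\F[t]$, we have $\det(U)\in\F^\times$, so $|\det(U)|=1$ and therefore $|\det(g)|=|\det(M)|$. Permuting rows by $\sigma$ changes the determinant only by a sign, so $|\det(M)|=|\det(C)|$.

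Next I would compute $|\det(C)|$ via the Leibniz formula
\[
\det(C)=\sum_{\pi\in S_n}\mathrm{sgn}(\pi)\prod_{i=1}^n c_{i,\pi(i)}.
\]
For the identity permutation, $\prod_i|c_{ii}|=\lambda_1\cdots\lambda_n$. For any $\pi\neq\id$ there is at least one index $i$ with $\pi(i)\neq i$, and for that index $|c_{i,\pi(i)}|<|c_{ii}|$, while for every other index $|c_{i,\pi(i)}|\le|c_{ii}|$. Hence $\prod_i|c_{i,\pi(i)}|<\prod_i|c_{ii}|=\lambda_1\cdots\lambda_n$. The ultrametric property of $|\cdot|$ then gives $|\det(C)|=\lambda_1\cdots\lambda_n$, since a single dominant term in an ultrametric sum determines the norm of the sum.

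Combining these steps yields $|\det(g)|=|\det(M)|=|\det(C)|=\lambda_1\cdots\lambda_n$, which is the claim. The only place where something could subtly fail is the strict ultrametric domination in the Leibniz expansion, so I would make that step explicit, invoking the refined inequalities $|c_{ij}|<|c_{ii}|$ from Proposition \ref{lem;good basis} (as opposed to the merely non-strict bounds $\|\bv_j\|\le\lambda_j$ that a bare basis of successive minima would provide). Everything else is formal, using only the ultrametric norm and the fact that $\GL_n(\F[t])$ consists of matrices with unit determinant.
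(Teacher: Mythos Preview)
Your proof is correct and follows essentially the same route as the paper's: both invoke Proposition~\ref{lem;good basis} to write $\sigma g\gamma=(c_{ij})$ with $\gamma\in\GL_n(R)$ and the strict off-diagonal bounds $|c_{ij}|<|c_{ii}|=\lambda_i$, then expand $\det(c_{ij})$ and use the ultrametric inequality to see that the diagonal term strictly dominates. Your version simply spells out the Leibniz-expansion step and the passage from the successive-minima basis to an element of $\GL_n(R)$ more explicitly than the paper does.
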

\begin{proof}
    According to Proposition \ref{lem;good basis} there exists a permutation 
    matrix $\sigma \in \GL_{n} (\K)$ and $\gamma\in \GL_{n}(R)$
    such that 
    \begin{align}\label{eq;cij1}
        \sigma g\gamma=(c_{ij}),
    \end{align}
    where $c_{ij}$ satisfies (\ref{eq;cij}). 
    Since $|\det(\sigma)|=|\det (\gamma)|=1$, we have 
    \[
    |\det (g)|=|\det(c_{ij})|=|c_{11}\cdots c_{nn}|=\lambda_1\cdots\lambda_n,
    \]
    where we expand the determinant and use the ultrametric property of the norm.
\end{proof}

\begin{proof}[Proof of Theorem \ref{thm:AKT}]
     Let $ g\in G$ and $\lambda_{i}=\lambda_{i}(\Lambda_g)$. Let  $\sigma, \gamma  $ and $(c_{ij})$ be as in the proof of Corollary \ref{cor;det}
    so that     (\ref{eq;cij1}) holds.  By possiblly replacing  $\gamma$ by 
    $\gamma \diag(\det(\gamma \sigma )^{-1} , 1, \ldots, 1),$ we assume further that 
    $\gamma \sigma \in \Gamma$ and hence $\sigma  g \gamma \in G$. 
    It follows that 
    \[
        b:= c_{11}\cdots c_{nn}\equiv 1\mod \mathfrak p. 
    \]
Let $a=\diag(c_{11}^{-1}b, c_{22}^{-1},  \ldots, c_{nn}^{-1})\in A$. We have 
    \[
    a  \sigma g \gamma  =h\in  \SL_n(\mathfrak o;\mathfrak p).
    \]
    Hence
    \[
    \sigma a \sigma \cdot g \cdot \gamma \sigma = \sigma h \sigma 
    \in \SL_n(\mathfrak o;\mathfrak p), 
    \]
    which gives the required decomposition of the theorem. 
\end{proof} 
\begin{cor}
    \label{cor;use}
    $\mathfrak m_n=\inf \{\bN (L_h)^{-1}: h\in  \SL_n(\mathfrak o;\mathfrak p) \}$. Therefore,  to estimate $\mathfrak m_n$, it suffices to estimate those $\bN (L_h)$ with $h\in \SL_n(\mathfrak o;\mathfrak p) $.
\end{cor}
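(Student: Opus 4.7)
The plan is to deduce this corollary directly from Theorem~\ref{thm:AKT} by verifying that the quantity $\bN(L_g)$ is invariant under the left action of $A$ and the right action of $\Gamma$ on $G$.

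First, I will check right-$\Gamma$-invariance. For $\gamma\in\Gamma$, the $i$-th row of $g\gamma$ equals the $i$-th row of $g$ multiplied on the right by $\gamma$, so directly from the definition $L_{g\gamma}(\bx)=L_g(\gamma\bx)$. Because $\gamma\in\SL_n(R)$, the map $\bx\mapsto\gamma\bx$ is a bijection of $R^n\setminus\{0\}$, hence $\bN(L_{g\gamma})=\bN(L_g)$. Next, for left-$A$-invariance, write $a=\diag(a_1,\ldots,a_n)\in A$; the $i$-th row of $ag$ is $a_i$ times the $i$-th row of $g$, so $L_{ag}(\bx)=\bigl(\prod_{i=1}^n a_i\bigr)L_g(\bx)=L_g(\bx)$ since $\det(a)=1$. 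In particular $\bN(L_{ag})=\bN(L_g)$.

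Combining these two invariances with Theorem~\ref{thm:AKT}, every $g\in G$ admits a decomposition $g=ah\gamma$ with $a\in A$, $h\in\SL_n(\mathfrak o;\mathfrak p)$ and $\gamma\in\Gamma$, giving $\bN(L_g)=\bN(L_h)$. Consequently the sets $\{\bN(L_g)^{-1}:g\in G\}$ and $\{\bN(L_h)^{-1}:h\in\SL_n(\mathfrak o;\mathfrak p)\}$ coincide, and taking infima yields the claimed identity. There is no substantive obstacle here: all of the difficulty has already been absorbed into Theorem~\ref{thm:AKT}. The only thing to keep straight is the convention that the $i$-th linear factor of $L_g$ comes from the $i$-th row of $g$, so that left $A$-multiplication merely scales the $n$ linear factors by entries whose product is $1$, while right $\Gamma$-multiplication just composes $L_g$ with an automorphism of the lattice $R^n$.
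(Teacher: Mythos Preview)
Your proof is correct and follows exactly the same approach as the paper: decompose $g=ah\gamma$ via Theorem~\ref{thm:AKT} and use the invariance $\bN(L_g)=\bN(L_h)$. You have simply spelled out the $A$- and $\Gamma$-invariance that the paper asserts without justification.
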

\begin{proof}
Let $L=L_g$ be a product of linear forms given by $g\in G$. By Theorem \ref{thm:AKT} we can write 
$g= a h \gamma$ where $a\in A, \gamma \in \Gamma$ and $h\in 
\SL_n(\mathfrak o;\mathfrak p)$. Note that $\bN(L_g)=\bN(L_h)$, so it suffices to estimate the latter. 
\end{proof}
Using Theorem \ref{thm:AKT} and its corollary, we get our first   lower bound of $\mathfrak m_n$.
\begin{prop}\label{prop;first}
    $\mathfrak{m}_n\geq q^{ n-1}$.
\end{prop}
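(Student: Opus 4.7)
The plan is to apply Corollary \ref{cor;use} to reduce the problem to estimating $\bN(L_h)$ for $h \in \SL_n(\mathfrak{o};\mathfrak{p})$, and then exhibit a single explicit integer vector for which $|L_h(\bx)|$ is small enough.

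Fix $h = (h_{ij}) \in \SL_n(\mathfrak{o};\mathfrak{p})$. By definition of this congruence subgroup, $h \equiv 1_n \bmod \mathfrak{p}$, which means $|h_{ii}| = 1$ for every $i$ and $|h_{ij}| \leqs q^{-1}$ for $i \neq j$. The natural test vector is $\bx = e_1 = (1,0,\ldots,0)^{\mathrm{tr}} \in R^n$, since the $i$-th linear form of $L_h$ evaluated at $e_1$ is simply $h_{i1}$. Hence
\[
|L_h(e_1)| = |h_{11}| \cdot \prod_{i=2}^{n} |h_{i1}| \leqs 1 \cdot q^{-(n-1)} = q^{-(n-1)}.
\]
This gives $\bN(L_h) \leqs q^{-(n-1)}$, so $\bN(L_h)^{-1} \geqs q^{n-1}$.

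Taking the infimum over $h \in \SL_n(\mathfrak{o};\mathfrak{p})$ and invoking Corollary \ref{cor;use}, we conclude $\mathfrak{m}_n \geqs q^{n-1}$. There is no real obstacle here; the entire content is in Theorem \ref{thm:AKT}, which lets us assume the defining matrix of $L$ lies in the first congruence subgroup, making the vector $e_1$ a witness.
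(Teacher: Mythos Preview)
Your proof is correct and matches the paper's argument essentially verbatim: both reduce via Corollary~\ref{cor;use} to $h\in\SL_n(\mathfrak o;\mathfrak p)$ and then evaluate $L_h$ at $\be_1$, using the congruence condition to bound the off-diagonal entries in the first column by $q^{-1}$.
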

\begin{proof}
Let $h\in \SL_n(\mathfrak o;\mathfrak p) $.
 For  $\be_1=(1,0,  \ldots, 0)^{\mathrm{tr}}\in R^n$ we have 
\[
|L_h(\be_1)|\le q^{-(n-1)}
\]
by the definition of $\SL_n(\mathfrak o;\mathfrak p)$. 
The conclusion follows from this and 
Corollary \ref{cor;use}.
\end{proof}
This lower bound coincides with the upper bound when $n \le q+1$. So we try to improve it when $n>q+1$. 

\begin{prop}\label{prop;second}
We have $\log _q\mathfrak m_n \ge 
 n-2+\left\lceil\frac{n}{q+1} \right\rceil
$, where $\lceil  
* \rceil $ is the smallest integer greater than or equal to
$\lceil *
\rceil$.
\end{prop}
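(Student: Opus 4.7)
The plan is as follows.  By Corollary~\ref{cor;use} it suffices to show that for every $h\in \SL_n(\mathfrak o;\mathfrak p)$ there exists $\bx\in R^n\setminus\{0\}$ with $|L_h(\bx)|\le q^{-(n-2+m)}$, where $m=\lceil n/(q+1)\rceil$.  The key idea is to test $L_h$ on vectors $\bx\in \F_q^n$ and exploit the second-order expansion of $h$.  Write $h\equiv 1+t^{-1}N\pmod{\mathfrak p^2}$ for a matrix $N\in\Mat_n(\F_q)$.  For $\bx\in\F_q^n$ one has $(h\bx)_i\equiv x_i+t^{-1}(N\bx)_i\pmod{\mathfrak p^2}$, so by the ultrametric property each $|(h\bx)_i|$ equals $1$ if $x_i\ne 0$, equals $q^{-1}$ if $x_i=0$ and $(N\bx)_i\ne 0$, and is at most $q^{-2}$ if both vanish.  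Setting $w=|\supp(\bx)|$ and $r=|\supp(N\bx)\setminus\supp(\bx)|$, this yields $|L_h(\bx)|\le q^{-(2n-2w-r)}$, so it suffices to exhibit a nonzero $\bx\in\F_q^n$ with $2w+r\le n+2-m$.

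I would then split on $N$.  If some column $j$ has off-diagonal weight $r_j^{\mathrm{col}}:=|\{k\ne j:N_{kj}\ne 0\}|\le n-m$, then $\bx=\be_j$ works immediately.  Otherwise every column satisfies $r_j^{\mathrm{col}}\ge n-m+1$, and I aim for a weight-two vector $\bx=\be_l-\alpha\be_k$ with $\alpha\in\F_q^*$ and $k\ne l$, chosen so that $s_{k,l,\alpha}:=|\{i\ne k,l:N_{il}=\alpha N_{ik}\}|\ge m$, which forces $r=n-2-s_{k,l,\alpha}\le n-m-2$.  Viewing the $n-2$ pairs $(N_{ik},N_{il})\in\F_q^2$ as points of $\F_q^2$ and parameterising the $q-1$ non-axis lines through the origin by $\alpha\in\F_q^*$, a pair with both coordinates nonzero lies on exactly one such line while the zero pair lies on all of them; summing gives $\sum_{\alpha\in\F_q^*}s_{k,l,\alpha}=p_{kl}+(q-1)z_{kl}$, where $p_{kl}$ and $z_{kl}$ count indices $i\ne k,l$ with both $N_{ik},N_{il}$ nonzero and both zero, respectively.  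Hence $\max_\alpha s_{k,l,\alpha}\ge z_{kl}+\lceil p_{kl}/(q-1)\rceil\ge m$ if and only if $p_{kl}+(q-1)z_{kl}\ge (m-1)(q-1)+1$.

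To secure such a pair $(k,l)$ I would average over all ordered pairs.  Interchanging the order of summation gives
\[
\sum_{k\ne l}\bigl(p_{kl}+(q-1)z_{kl}\bigr)=\sum_{i=1}^n f(\rho_i),\qquad f(\rho):=\rho(\rho-1)+(q-1)(n-1-\rho)(n-2-\rho),
\]
where $\rho_i$ is the off-diagonal weight of row $i$.  The polynomial $f$ is a convex quadratic with positive leading coefficient $q$, and a short computation shows that its critical point lies strictly below $n-m+1$ throughout the range $m=\lceil n/(q+1)\rceil\ge 2$.  Since $\sum_i\rho_i=\sum_j r_j^{\mathrm{col}}\ge n(n-m+1)$, Jensen's inequality combined with the monotonicity of $f$ on $[n-m+1,\infty)$ yields $\sum_i f(\rho_i)/n\ge f(n-m+1)$, and therefore
\[
\max_{k\ne l}\bigl(p_{kl}+(q-1)z_{kl}\bigr)\ge \frac{1}{n(n-1)}\sum_i f(\rho_i)\ge \frac{f(n-m+1)}{n-1}.
\]

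The main obstacle is the algebraic verification that $f(n-m+1)/(n-1)\ge (m-1)(q-1)+1$ for all admissible pairs $(n,m)$.  Writing $n=(m-1)(q+1)+k$ with $1\le k\le q+1$, the inequality is tightest at $k=1$ (the smallest $n$ for each $m$), where after simplification it reduces to showing $q(m-2)(m-3+2/q)\ge 0$.  This vanishes precisely at $m=2$ (forcing $n=q+2$, where Proposition~\ref{prop;first} already gives the correct order of magnitude) and is strictly positive for $m\ge 3$, so the average bound holds in every case, and the pair $(k,l)$ attaining the maximum together with the corresponding $\alpha\in\F_q^*$ produces the desired $\bx$.
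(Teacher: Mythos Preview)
Your argument is correct, and the algebraic verification in the final paragraph checks out (at $k=1$ one indeed gets $(m-2)\bigl(q(m-3)+2\bigr)\ge 0$, and the expression is increasing in $k$ for $k\ge 1$). The parenthetical remark invoking Proposition~\ref{prop;first} at $m=2$ is a red herring, since that proposition only yields $n-1$ rather than $n$; what actually saves you there is that the inequality holds with equality, which is still enough. You should also state explicitly that the case $m=1$ (i.e.\ $n\le q+1$) is already covered by Proposition~\ref{prop;first}.

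The paper's route is much shorter and avoids all of the averaging and convexity. Rather than linearising to the matrix $N$ of $t^{-1}$-coefficients, the paper works with the actual entries $b_{ij}$ of $h$ and fixes a single pair of columns, say $1$ and $2$. For each row $i$ it records the projective direction $\theta_i\in\mathbb P^1(\F_q)$ of $(b_{i1},b_{i2})$, which is always well-defined since the entries are nonzero. A straight pigeonhole over the $q+1$ points of $\mathbb P^1(\F_q)$ then gives $r=\lceil n/(q+1)\rceil$ rows sharing a common direction $\theta$, and one tests on $\be_1$, $\be_2$, or $\be_2-\theta\be_1$ according to whether $\theta=\infty$, $0$, or neither. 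Because the two ``diagonal'' rows $i=1,2$ automatically contribute the directions $0$ and $\infty$, all $n$ rows participate in the pigeonhole, and no case split on column weights or averaging over pairs $(k,l)$ is needed. Your reduction to $N\in\Mat_n(\F_q)$ discards the information in higher-order terms of $b_{ij}$, which forces you to treat zero entries of $N$ separately via $z_{kl}$ and to search over all column pairs; the paper's coarser projective invariant sidesteps this entirely.
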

\begin{proof}
    Let $g=(b_{ij})\in \SL_n(\mathfrak o;\mathfrak p)$. 
    We assume without loss of generality that $b_{ij}\neq    0$, since otherwise
    $\bN (L_g)=0$. We also assume $n>q+1$ since otherwise Proposition \ref{prop;first} gives the conclusion.
    For each $1\le i\le n$, we let $\theta_i\in \F\cup \{\infty\}$ so that 
    $b_{i2} b_{i1}^{-1}\equiv \theta  \mod \mathfrak p$ where $\theta_i=\infty$
    if $|b_{i2}|>|b_{i1}|$. 
     There are at least 
    \[
    r=\left\lceil\frac{n}{q+1} \right\rceil\ge 2
    \]
    $\theta_i$ with the same value $\theta$. 
    
      If $\theta =\infty$, then in view of the definition of $\SL_n(\mathfrak o;\mathfrak p)$ we have there are at least 
      $r-1$ entries of $(b_{11}, \ldots, b_{n1})^{\mathrm{tr}}$  with 
      $|b_{i1}|\le q^{-2}$. So we have 
      \[
      \bN(L_g)\le |L_g(\be_1)|\le q^{-(n-1) -(r-1)}=q^{-(n-2+r)}.
      \]
    The case $\theta=0$ can be estimated similarly and  
    \[
    \bN(L_g)\le |L_g(\be_2)|\le q^{-(n-1)- (r-1)}=q^{-(n-2+r)}.
    \]
    If $\theta\neq 0, \infty$, then 
    \[
        \bN(L_g)\le |L_g(\be_2-\theta \be_1)|\le q^{-(n-2 +r)}.
    \]
    Therefore, we have $\bN(L_g) \le q^{-(n-2 +r)}$. Since $g$ is arbitrary we have
$\log _q\mathfrak m_n \ge n-2+r$.

\end{proof}

For $\bv=(v_1,\dots,v_m)^{\mathrm{tr}}\in \K^{m}$ let $N(\bv)=|v_1\cdots v_m|$.
\begin{lem}\label{lem;trend}
    Let $\bv_1,\dots ,\bv_{\ell}\in \mathfrak {o}^{m}$, then there exists $(a_1,\dots,a_\ell)\in \F^{k}\backslash\{0\}$ such that 
    \[
    -\log_{q}N(a_1\bv_1+\cdots+a_\ell\bv_\ell)\ge 
    \left (\frac{1}{q-1}-\frac{\ell}{q^{\ell}-1}\right )m.
    \]
\end{lem}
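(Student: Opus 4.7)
The plan is to prove the lemma by a simple averaging (pigeonhole) argument: I will lower-bound the sum of $-\log_q N(\ba \cdot \mathbf{V})$ over all nonzero $\ba\in\F^\ell$, where $\mathbf{V}$ denotes the matrix with columns $\bv_1,\dots,\bv_\ell$, and then divide by $q^\ell-1$ to find a single good $\ba$. Since $-\log_q N(\bw)=\sum_{j=1}^m v(w_j)$, where $v(\alpha)\df -\log_q|\alpha|\ge 0$ for $\alpha\in\mathfrak o$, I can swap the two sums and reduce to a one-dimensional problem: for each coordinate $j$, lower-bound $\sum_{\ba\in\F^\ell\sm\{0\}} v(\ba\cdot\bw)$ uniformly in $\bw\in\mathfrak o^\ell$, where $\bw$ is the $j$-th row of $\mathbf V$.

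To handle this one-dimensional problem I would expand $\bw=\sum_{k\ge 0}\bw_k t^{-k}$ with $\bw_k\in\F^\ell$. Then $v(\ba\cdot\bw)\ge k+1$ iff $\ba\cdot\bw_0=\cdots=\ba\cdot\bw_k=0$, i.e.~$\ba$ lies in the annihilator $A_k\subseteq\F^\ell$ of $\spa_\F\{\bw_0,\dots,\bw_k\}$. Writing $r_k\df\dim_\F\spa\{\bw_0,\dots,\bw_k\}$, one has $|A_k|=q^{\ell-r_k}$, and
\[
\sum_{\ba\ne 0} v(\ba\cdot\bw)
=\sum_{k\ge 0}\bigl|\{\ba\ne 0:v(\ba\cdot\bw)\ge k+1\}\bigr|
=\sum_{k\ge 0}\bigl(q^{\ell-r_k}-1\bigr).
\]
Since $r_{-1}=0$ and $r_k-r_{k-1}\in\{0,1\}$, we have $r_k\le\min(k+1,\ell)$, so each term is at least $q^{\ell-\min(k+1,\ell)}-1$, and
\[
\sum_{\ba\ne 0}v(\ba\cdot\bw)\ge\sum_{k=0}^{\ell-1}\bigl(q^{\ell-k-1}-1\bigr)
=\frac{q^\ell-1}{q-1}-\ell.
\]
(If the Taylor coefficients of $\bw$ span a proper subspace of $\F^\ell$, the sum is $+\infty$ and the bound holds trivially.)

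Summing over the $m$ coordinates and dividing by $q^\ell-1$ produces, via averaging, an $\ba\in\F^\ell\sm\{0\}$ with
\[
-\log_q N(\ba\cdot\mathbf V)\ge\frac{m}{q^\ell-1}\Bigl(\frac{q^\ell-1}{q-1}-\ell\Bigr)=\Bigl(\frac{1}{q-1}-\frac{\ell}{q^\ell-1}\Bigr)m,
\]
which is precisely the desired inequality. The only slightly delicate point will be justifying the telescoping identity and the extremal choice $r_k=\min(k+1,\ell)$ cleanly; everything else is bookkeeping.
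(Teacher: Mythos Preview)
Your proposal is correct and follows essentially the same approach as the paper: both average $-\log_q N(\cdot)$ over all nonzero $\ba\in\F^\ell$, decompose coordinate by coordinate, apply the tail-sum identity $\sum_{\ba}v(\ba\cdot\bw)=\sum_{r\ge 1}\#\{\ba: v(\ba\cdot\bw)\ge r\}$, and lower-bound each level-set count by $q^{\ell-r}-1$ using that the condition $v\ge r$ imposes at most $r$ linear constraints. Your Taylor-coefficient formulation via the annihilators $A_k$ is just a slightly more explicit way of phrasing the paper's ``solve at most $r$ linear equations'' step.
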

\begin{proof}
    Suppose $\bv_{i}=(v_{1i},\dots,v_{mi})^{\mathrm{tr}}$. For $1\leq j\leq m$ and $a=(a_1,\dots,a_\ell)\in \F^{\ell}\backslash{0}$, let $s(j,a)=-\log_{q}|a_{1} v_{j1}+\cdots+a_{\ell}v_{j{\ell}}|\ge 0$. Then  we have
    \[
    -\log_{q}N(a_1\bv_1+\cdots+a_\ell\bv_\ell)=\sum_{j=1}^{m}s(j,a).
    \]

    Fix $1\leq j\leq m$ and $1\leq r\leq \ell$,
    if we want $s(j, a)\ge r$, then we need to solve at most $r$ linear equations. 
    Therefore, 
    \[
    \# \{a\in \F^{\ell}\backslash\{0\}:s(j,a)\geq r\}\geq q^{\ell-r}-1.
    \]
    Hence
    \begin{eqnarray}
        \sum_{a\in \F^{\ell}\backslash\{0\}}s(j,a)&=&\sum_{r=1}^{\infty}r\# \{a\in \F^{\ell}\backslash\{0\}:s(j,a)=r\}\nonumber \\ 
        &=&\sum_{r=1}^{\infty}\# \{a\in \F^{\ell}\backslash\{0\}:s(j,a)\geq r\}\nonumber\\ 
        &\geq&\sum_{l=1}^{\ell}q^{\ell-r}-1\nonumber\\ 
        &=& \frac{q^{\ell}-1}{q-1}-\ell\nonumber.
    \end{eqnarray}
    So
    \[ 
    \sum_{a\in \F^{\ell}\backslash\{0\}}\sum_{j=1}^{m}s(j,a)=\sum_{j=1}^{m}  \sum_{a\in \F^{\ell}\backslash\{0\}}s(j,a)\geq \left (\frac{q^{\ell}-1}{q-1}-\ell \right)m.
    \]
    Thus there exits an $a=(a_1,\dots,a_\ell)=\in \F^{\ell}\backslash\{0\}$ such that
    \[
    -\log_{q}N(a_1\bv_1+\cdots+a_\ell\bv_\ell)=\sum_{j=1}^{m}s(j,a)\ge  \frac{ (\frac{q^{\ell}-1}{q-1}-\ell)m}{q^{\ell}-1}=
    \left (\frac{1}{q-1}-\frac{\ell}{q^{\ell}-1}\right )m.
    \]
\end{proof}
\begin{prop}\label{prop;trend with r}
For any integer  $1<\ell<n$, we have
    \[
    \log_{q}\mathfrak{m}_n\geq (n-\ell)+
    \left (\frac{1}{q-1}-\frac{\ell}{q^{\ell}-1}\right )(n-\ell).
    \]
    
\end{prop}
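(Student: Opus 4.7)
The plan is to apply Corollary \ref{cor;use} and bound $\bN(L_g)$ from above for an arbitrary $g=(b_{ij})\in \SL_n(\mathfrak o;\mathfrak p)$ by evaluating $L_g$ at a cleverly chosen $\bx\in R^n\setminus\{0\}$ produced by Lemma \ref{lem;trend} applied to a suitable family of $\ell$ vectors extracted from $g$.

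Specifically, I would consider the ``tail'' columns
\[
\bv_j\df (b_{\ell+1,j},\ldots,b_{n,j})^{\mathrm{tr}},\qquad 1\le j\le \ell.
\]
Because $g\equiv 1_n\bmod \mathfrak p$, every off-diagonal entry of $g$ lies in $\mathfrak p$; since $i>\ell\ge j$ forces $i\ne j$, each $\bv_j$ in fact lies in $\mathfrak p^{n-\ell}$, not merely in $\mathfrak o^{n-\ell}$. To make Lemma \ref{lem;trend} applicable I would rescale to $\bw_j\df t\bv_j\in \mathfrak o^{n-\ell}$ and apply the lemma to $\bw_1,\ldots,\bw_\ell$, obtaining $a=(a_1,\ldots,a_\ell)\in \F^\ell\setminus\{0\}$ with
\[
-\log_q N(a_1\bw_1+\cdots+a_\ell\bw_\ell)\ge \left(\frac{1}{q-1}-\frac{\ell}{q^\ell-1}\right)(n-\ell).
\]
Since $|t|=q$, undoing the rescaling supplies an extra $q^{-(n-\ell)}$:
\[
N(a_1\bv_1+\cdots+a_\ell\bv_\ell)=q^{-(n-\ell)}\,N(a_1\bw_1+\cdots+a_\ell\bw_\ell),
\]
which already accounts for the leading summand $n-\ell$ in the claimed bound.

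Now set $\bx\df a_1\be_1+\cdots+a_\ell\be_\ell\in R^n\setminus\{0\}$. The product $L_g(\bx)=\prod_{i=1}^n(a_1 b_{i1}+\cdots+a_\ell b_{i\ell})$ splits into tail factors ($i>\ell$), whose normed product is precisely $N(a_1\bv_1+\cdots+a_\ell\bv_\ell)$, and head factors ($i\le \ell$), each of which I would bound by $1$ via the ultrametric inequality: for such $i$ the diagonal contribution $a_ib_{ii}$ satisfies $|a_ib_{ii}|\le 1$ (since $|b_{ii}|=1$ and $a_i\in\F\subset\mathfrak o$) while the off-diagonal contributions $a_jb_{ij}$ with $j\ne i$ satisfy $|a_jb_{ij}|\le q^{-1}$ because $b_{ij}\in \mathfrak p$. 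Multiplying the two pieces yields $|L_g(\bx)|\le N(a_1\bv_1+\cdots+a_\ell\bv_\ell)\le q^{-(n-\ell)-\bigl(\frac{1}{q-1}-\frac{\ell}{q^\ell-1}\bigr)(n-\ell)}$, and taking the supremum over $g$ in Corollary \ref{cor;use} completes the proof.

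There is no serious obstacle; the essential point is the dual role played by the congruence $g\equiv 1_n\bmod\mathfrak p$. It is precisely what allows the tail vectors $\bv_j$ to be rescaled into $\mathfrak o^{n-\ell}$ by the single factor $t$ (producing the leading summand $n-\ell$), and simultaneously what forces each of the $\ell$ head factors to be automatically bounded by $1$ despite the unit diagonal. Without this congruence the argument would collapse to the bare output of Lemma \ref{lem;trend}.
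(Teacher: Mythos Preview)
Your proposal is correct and is essentially the paper's own argument: the paper likewise applies Lemma~\ref{lem;trend} to the rescaled tail vectors $(t\,b_{\ell+1,j},\ldots,t\,b_{n,j})^{\mathrm{tr}}\in\mathfrak o^{n-\ell}$ for $1\le j\le\ell$, and then passes to $\bN(L_g)$ via exactly the head/tail factor split you describe. The only cosmetic difference is notation (the paper calls the full columns $\bw_j$ and the rescaled tails $\bv_j$, the reverse of your convention).
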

\begin{proof}
    Let  $g=(g_{ij})\in \SL_n(\mathfrak o;\mathfrak p)$ and let $\bw_j \ (1\le j\le n) $ be the column vectors of $g$.
    Applying  Lemma \ref{lem;trend} to the vectors 
    $$\bv_j= (t g_{\ell+1,j}, \ldots, t g_{n,j})^{\textup{tr}}\in \mathfrak o^{n-\ell }\qquad (1\le j\le \ell), $$
    we know that there exists $(a_1,\dots,a_\ell)\in \F^{\ell}\backslash\{0\}$ such that
    \[
    -\log_{q}N(a_1\bw_1+\cdots+a_\ell\bw_\ell)\ge 
     (n-\ell)+(\frac{1}{q-1}-\frac{\ell}{q^{\ell}-1})(n-\ell).
    \]
  It follows that 
    \[
    -\log_{q}\bN(L_{g})\ge  (n-\ell)+
    \left (\frac{1}{q-1}-\frac{\ell}{q^{\ell}-1}\right )(n-\ell).
    \]
 The conclusion follows from this and Corollary \ref{cor;use}.
\end{proof}
\begin{proof}
    [Proof of Theorem \ref{thm;main}]
    When $n\le q$
    the lower bound of $\log_q \mathfrak m_n $ given in the theorem follows from 
    Proposition \ref{prop;second}. Assume now $n>q$. Using Proposition \ref{prop;trend with r} for 
      ${\ell}=\lceil \log _q n \rceil\ge 2 $, we get 
     \begin{align*}
     \log _q \mathfrak m_n &\ge (n-\lceil \log _q n \rceil)
   \left  ( \frac{q}{q-1}- \frac{\lceil \log _q n \rceil}{n-1}\right)\\
    &\ge \frac{nq}{q-1}-\lceil \log _q n \rceil
    \left (\frac{n-\lceil \log _q n \rceil}{n-1}+\frac{q}{q-1}\right)\\
    &\ge \frac{nq}{q-1}-\frac{2q}{q-1} \lceil \log _q n \rceil\\
    & \ge \frac{nq}{q-1}-\frac{3q}{q-1}  \log _q n .
     \end{align*}
It follows that 
\[
    \liminf_{n\rightarrow\infty} \frac{\log_{q}\mathfrak{m}_n}{n}\geq \frac{q}{q-1}.
    \]
\end{proof}

\section{Characterization of periodic orbits}\label{section;periodic}

The aim of this section is to prove Theorem \ref{thm;equivalent}. 
We first show that (\rmnum{1}) implies (\rmnum{2}).

Let $\alpha_1, \ldots, \alpha_n $ be an $\F(t)$-basis of a degree $n$ extension $k/\F(t)$ split at $\infty$.  Let $\sigma_1, \ldots , \sigma_n$ be distinct  embeddings of  $k$ into $\K$. 
Suppose $g\in G$ and 
$L_g$  is proportional to (\ref{eq;rational}). 
Since $\K[x_1, \ldots, x_n]$ is a unique factorization domain, there exists 
$u\in N_{GL_{n}(\K)}(A)=\{g\in GL_{n}(\K):gAg^{-1}=A\}$ such that 
   \begin{align}\label{eq;temp}
    ug=  \begin{pmatrix}
    \sigma_1(\alpha_1)&  \cdots &\sigma_1(\alpha_n)\\
    \vdots & \ddots & \vdots \\
    \sigma_n(\alpha_1) & \cdots & \sigma_n(\alpha_n)
    \end{pmatrix}.
     \end{align}
     
Let $\mathfrak o_k=\{\alpha\in k: \alpha \mbox{ is integral over }\F[t] \}$. 
It follows  from the function field analogue of Dirichlet's unit 
theorem (\cite[Proposition 14.2]{rosen}) that 
\begin{align}\label{eq;group}
 \{\diag(\sigma_1(\alpha), \ldots, \sigma_n(\alpha)): \alpha \in 
\mathfrak o_k, N_{k/\F(t)}(\alpha)=1 \}    
\end{align}
is a lattice of $A$. The group in (\ref{eq;group}) is contained in the
$\textup{Stab}_A(g\Gamma)$, so $Ag\Gamma$ is periodic in  $G/\Gamma$. 

Now we assume $Ag\Gamma$ is periodic in $G/\Gamma$ and prove (\rmnum{2}) 
implies (\rmnum{1}).
By definition 
 $\Stab_{A}(g\Gamma)$ is a lattice of  $A$. We fix a uniformizer $\varpi$ of $\K$.    
 Let $q: \K^*\to \Z$ be the homomorphism defined by $q(b)=\log_q |b|$.
  The map 
 \[
 \pi: A\to H:=\{ (m_1, \ldots,m_n)\in \Z^n: \sum_{i=1}^n m_i=0 \} 
 \]
 defined by $\pi(\diag(a_1, a_2, \cdots, a_n))= (q(a_1), \ldots, q(a_n))$
 is a proper surjective group homomorphism. Hence $\pi (\Stab_{A}(g\Gamma))$ has finite 
 index in $H$. So there exists $(m_1, \ldots,m_n)\in \pi (\Stab_{A}(g\Gamma))$
 such that $\sum_{i\in I}m_i\neq \sum_{j\in J}m_j$ for all nonempty  disjoint sets $I, J\subset
 \{1, 2,\ldots, n\}$. 
It follows that there exists 
    \begin{align}\label{eq;aij}
        a=\diag(a_1, a_2, \cdots, a_n)\in \Stab_{A}(g\Gamma) \mbox{ with }
        \prod_{i\in I}|a_i|\neq \prod_{j\in J} |a_j|
        \mbox{ for } I\cap J=\emptyset.
    \end{align}
    
Note that  $ \Stab_{A}(g\Gamma)= g\Gamma g^{-1}\cap A$. So  there exists $\gamma\in \Gamma$ such that 
\begin{align}\label{eq;ggamma}
a=g\gamma g^{-1}.
\end{align}
So the characteristic polynomial  $f(x)=\det(xI-a)=\det (xI -\gamma)$ has coefficients in $R$. We claim that $f$ is irreducible in the 
ring $\F(t)[x]$. Otherwise, $f=f_1f_2$ where $f_1, f_2\in R[x]$ are monic polynomials with degree $>1$. Since the constant term of $f$ is $1$, the constant terms of $f_1$ and $f_2$ are in $\F^*$. So the product of roots of $f_i$  has absolute value $1$, which contradicts (\ref{eq;aij}). This proves the claim.  

 Let $k=\F(t, a_1)$ where $a_1$ is the first diagonal entry of $a$ in (\ref{eq;aij}). 
The claim above implies that  $k/\F(t)$ is a degree $n$ extension  split at $\infty$. 
For $1\le i\le n$, let  $\sigma_{i}$ be the distinct embeddings of $k$ into $\K$ defined by  $\sigma_{i}(a_1)=a_i$. Let $ (\alpha_1, \ldots, \alpha_n)\in k^n$ be a left  eigenvector of  $\gamma$ with eigenvalue $a_1$, i.e. 
    \[
        a_1 (\alpha_1, \ldots, \alpha_n) =(\alpha_1, \ldots, \alpha_n)\gamma.
    \]
Let $h$ be defined as the right hand side of (\ref{eq;temp}). Then we have
\[
a h = h\gamma.
\]
On the other hand in view of  (\ref{eq;ggamma}), there exist nonzero numbers 
 $c_1, \ldots, c_n\in  \K$ such that
    \[
   g=  \diag(c_1, \ldots, c_n)  h.
    \]
Therefore, $L_{g}= c_1\cdots c_n L_h$, which completes the proof of (\rmnum{2})
implies (\rmnum{1}).

\section{Algebraic Theory of function fields}
In this section, we review algebraic number theory and arithmetic geometry of a finite geometric extension $k/\F(t)$. Here geometric means $\F$ has no finite extensions in $k$ other than itself. 
More details can be found in \cite{lang1994algebraic}, \cite{rosen} and \cite{salvador2006topics}. This allows us to get a relation between $|d_k|$ and   the  genus of $k$ in Proposition \ref{prop;main}.
 Then we use results in arithmetic geometry to determine when the minimal genus 
 is equal to zero or one.

Recall that a valuation (or place) of $k$ is a surjective  homomorphism $v: k^* \to \Z$ such that $v(x+y)\ge \inf \{  v(x), v(y) \}$.
Here $k^*$ is the multiplicative group of nonzero elements of $k$. 
We extend $v$ to all of $k$ by  setting $v(0)=\infty$. 
A valuation $v$ of $k$ gives an absolute value $|\cdot|_v$ of $k$ with $|x|_v= q^{-v(x)}$ where $q=p^e=\sharp\, \F$.  
Let $k_v$  be the completion  of $k$  with respect to $|\cdot|_v$.
The absolute value  $|\cdot|_v$ and hence the valuation  $v$ extends naturally to $k_v$.
The residue field $F(v)$ of $v$ is the quotient of the valuation ring 
$\mathfrak o_v:=\{x\in k_v: v(x)\ge 0 \}$ by its maximal ideal. 

Let $\mathcal{P}_k$ be the set of  valuations of $k$. The free abelian group generated by $\mathcal{P}_k$ is called the divisor group of $k$ and is  denoted by $\textup{Div}(k)$. 
 Every divisor $D$ can be uniquely written as
\[D=\sum_{v\in\mathcal{P}_{k}}v(D)D_v,\]
where  $D_v \ (v\in \mathcal P_k)$ is the free basis. 
Let $\textup{deg\,}{D_v}=[F(v):\F]$ be the degree of $v$ and extend it linearly to a map $\textup{deg}: \textup{Div}(k)\to \Z$. 

\begin{prop}\cite[Proposition 5.1]{rosen}\label{prop,divisor}
Let $(x)_{k}=\sum_{v\in \mathcal{P}_{k}}v(x)D_v$ be the principal divisor given by $x\in k^*$. 
    Write it as  $(x)_k= (x)_{k,o}-(x)_{k, \infty}$ where 
    \[
    (x)_{k,o}=\sum_{v\in \mathcal{P}_k, v(x)> 0}v(x)D_v\quad\mbox{and}\quad  
    (x)_{k, \infty}=-\sum_{v\in \mathcal{P}_k, v(x)< 0}v(x)D_v.
    \]
If $x\in k\setminus \F$, then we have $[k:\F(x)]=\textup{deg\,}
(x)_{k,o}=\textup{deg\,}(x)_{k,\infty}$. In particular, we have $\textup{deg\,}(x)_{k}=0$ for all $x\in k^*$.
\end{prop}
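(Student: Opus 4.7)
The plan is to reduce the statement to a calculation on the rational subfield $\F(x)\subset k$ and then invoke the fundamental equality for places in a finite extension of global function fields.

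First, since $k/\F$ is a geometric extension, any element $x\in k\setminus\F$ is necessarily transcendental over $\F$. Hence $\F(x)$ is a rational function field and $k/\F(x)$ is a finite (separable, as $\F$ is perfect) extension of degree $[k:\F(x)]$. I would then describe $(x)_{\F(x)}$ explicitly: the places of $\F(x)$ correspond to the monic irreducible polynomials in $\F[x]$ together with a single infinite place $P_\infty$. The element $x$ is itself a monic irreducible polynomial, so $(x)_{\F(x)}=D_{P_0}-D_{P_\infty}$ with $\deg P_0=\deg P_\infty=1$, which already settles the proposition over $\F(x)$.

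The second step is to push this up to $k$. For each place $w$ of $\F(x)$ and each place $v$ of $k$ lying above $w$, one has $v(y)=e(v|w)\,w(y)$ for $y\in\F(x)^{*}$, and $[F(v):F(w)]=f(v|w)$. Applied to $y=x$ and $w=P_0$, this gives
\begin{align*}
\deg(x)_{k,o}=\sum_{v\mid P_0}v(x)\,[F(v):\F]=\sum_{v\mid P_0}e(v|P_0)f(v|P_0)=[k:\F(x)],
\end{align*}
where the last equality is the fundamental equality $\sum_{v\mid w}e(v|w)f(v|w)=[k:\F(x)]$ combined with $\deg P_0=1$. An identical computation at $P_\infty$ yields $\deg(x)_{k,\infty}=[k:\F(x)]$, and subtracting the two gives $\deg(x)_{k}=0$ on $k\setminus\F$. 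The case $x\in\F^{*}$ is trivial since $v(x)=0$ for every place $v$, so $(x)_{k}=0$ and hence $\deg(x)_{k}=0$ for all $x\in k^{*}$.

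The main obstacle is the fundamental equality $\sum_{v\mid w}e(v|w)f(v|w)=[k:\F(x)]$, which underlies the whole computation. In the global function field setting this is standard; the cleanest route is via the decomposition of the completed tensor product $k\otimes_{\F(x)}\F(x)_w=\prod_{v\mid w}k_v$ with $\dim_{\F(x)_w}k_v=e(v|w)f(v|w)$, and I would simply quote this rather than reproduce the argument, as it is already encoded in the references to \cite{rosen} and \cite{salvador2006topics}.
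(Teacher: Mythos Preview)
The paper does not prove this proposition; it merely cites \cite[Proposition~5.1]{rosen}. Your sketch is the standard argument and is essentially what one finds in Rosen.

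One correction: the parenthetical ``separable, as $\F$ is perfect'' is wrong. The field $\F(x)$ is \emph{not} perfect in positive characteristic, and $k/\F(x)$ need not be separable (take $k=\F(y)$ and $x=y^{p}$). This does not damage the conclusion, because the fundamental equality $\sum_{v\mid w}e(v\mid w)f(v\mid w)=[k:\F(x)]$ holds for arbitrary finite extensions of global function fields. However, your proposed justification via $k\otimes_{\F(x)}\F(x)_{w}\cong\prod_{v\mid w}k_{v}$ does use separability --- in the inseparable case that tensor product can acquire nilpotents and fails to be a product of fields. So either drop the separability remark and quote the fundamental equality directly (it is, for instance, \cite[Theorem~7.6]{rosen} or the corresponding statement in \cite{salvador2006topics}), or keep the tensor-product route but acknowledge that a separate reduction handles the inseparable case.
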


A divisor $D$ is said to be effective if $v(D)\ge 0$ for all $v\in \mathcal P_k$. 
In this case we  write  $D\ge 0$ .
Let   $L(D)=\{x\in k: (x)_k+D\ge 0\}$. It follows from the definition of the valuation that $L(D)$ is a vector space over $\F$. 
Let  $l(D)=\dim_{\F}L(D)$.
The genus   $g_k$ of the field $k$ is 
 $$
  1+\max\{\textup{deg}(D)-l(D):D\in \textup{Div}(k)\}.$$ 
It 
 is always nonnegative since $L(0)=\F$. The genus of $\F(t)$ is zero. 
Later we will use the following corollary of the Riemann-Roch theorem.
\begin{lem}\label{lem;rr}
Let $D\in \textup{Div}(k)$ and $\textup{deg}(D)>2g_k-2$. Then $l(D)=\textup{deg}(D)-g_k+1$.  
\end{lem}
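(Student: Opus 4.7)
The plan is to deduce the claim as a direct consequence of the Riemann--Roch theorem for function fields. Riemann--Roch provides a canonical divisor $K\in\textup{Div}(k)$ of degree $2g_k-2$ such that
\[
l(D)-l(K-D)=\textup{deg}(D)-g_k+1
\]
holds for every divisor $D$ on $k$. The strategy is then to show that under our hypothesis the correction term $l(K-D)$ vanishes, so that the Riemann--Roch identity collapses to the desired formula.

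The key auxiliary fact to establish is that $l(D')=0$ whenever $\textup{deg}(D')<0$. Indeed, suppose $0\neq x\in L(D')$; then $(x)_k+D'$ is effective, hence has nonnegative degree. But $\textup{deg}((x)_k)=0$ by Proposition \ref{prop,divisor}, so $\textup{deg}(D')\ge 0$, a contradiction. Applying this to $D'=K-D$, whose degree equals $(2g_k-2)-\textup{deg}(D)$ and is negative by assumption, yields $l(K-D)=0$.

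Substituting back into Riemann--Roch then gives $l(D)=\textup{deg}(D)-g_k+1$, as claimed. There is no real obstacle here: the lemma is a textbook corollary of Riemann--Roch, and the only nontrivial input is the Riemann--Roch theorem itself, which is standard in the function-field setting (see e.g.~\cite{rosen} or \cite{salvador2006topics}). The proof essentially amounts to checking the sign of $\textup{deg}(K-D)$ and invoking the trivial vanishing of $L$ on negative-degree divisors.
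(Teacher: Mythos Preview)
Your proposal is correct and matches the paper's treatment: the paper does not give a proof at all, merely introducing the lemma as ``the following corollary of the Riemann-Roch theorem,'' and your argument is exactly the standard derivation it is alluding to.
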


Let  $v$ and  $w$  be  valuations  of $k$ and $\F(t)$, respectively.
We write  $v/w$ if there exists a positive integer $e_{v/w}$ such that 
$v|_{\F(t)}=e_{v/w} w$. We say $v$ is over $w$ or $w$ is below $v$. 
The number   $e_{v/w}$  is the  ramification index of  
$v/w$. In this case, $F(w)$ can be naturally identified with a sub-field of $F(v)$ and   $f_{v/w}=[F(v):F(w)]$ is  the inertia degree of $v/w$. 
Recall that there is a group homomorphism $N_{k/\F(t)}: \textup{Div}(k)\to \textup{Div}{(\F(t))}$ given by $ N_{k/\F(t)}(D_v)= {f_{v/w}} D_w$ for every prime divisor $v/w$. 
Let  $w_{\infty}$ be  the valuation of $\F(t)$
given by $w_\infty(t^{-1})=1$.
Let $\mathfrak{D}_{k/\F(t)}$
be the different divisor of $k/\F(t)$ defined by 
 \[v(\mathfrak{D}_{k/\F(t)})= \max\{-v(x): \textup{tr}_{k_v/\F(t)_w}(x )\in \mathfrak o_w  \}\] 
 where $w\in \mathcal P_k$ is below $v$. 
We need the following special case of the Riemann-Hurwitz genus formula.
 \begin{prop}\label{thm;R-H thm}\cite[Theorem 7.16]{rosen}
Let $k/\F(t)$ be a finite geometric separable extension of degree n. Let $\mathfrak{D}_{k/\F(t)}$ be the  different divisor of $k/\F(t)$, then
\[2g_{k}-2=-2n+\textup{deg}(\mathfrak{D}_{k/\F(t)}).\]
\end{prop}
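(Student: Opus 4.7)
The plan is to reduce the Riemann--Hurwitz formula to a divisorial identity relating canonical divisors on $k$ and $\F(t)$ via the different, and then take degrees. The global identity I aim to establish is
\[
(dt)_k \;=\; N^{*}\bigl((dt)_{\F(t)}\bigr) \;+\; \mathfrak{D}_{k/\F(t)} \quad \text{in } \textup{Div}(k),
\]
where $(dt)_K$ denotes the divisor of the differential $dt \in \Omega_{K/\F}$ for $K = k$ or $\F(t)$, and $N^{*} : \textup{Div}(\F(t)) \to \textup{Div}(k)$ is the conorm $D_w \mapsto \sum_{v/w} e_{v/w}\, D_v$; here $dt$ is viewed in $\Omega_{k/\F}$ via the inclusion $\F(t) \hookrightarrow k$.

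The first step is to establish the local version of this identity at each place: for every $v \in \mathcal{P}_k$ over $w \in \mathcal{P}_{\F(t)}$ I claim $v(dt) = e_{v/w}\, w(dt) + v(\mathfrak{D}_{k/\F(t)})$. This is a computation in the local extension $\F(t)_w \subseteq k_v$: choosing uniformizers $\tau$ at $w$ and $\pi$ at $v$ and writing $dt = (dt/d\pi)\, d\pi$, the formula reduces to $v(d\tau/d\pi) = v(\mathfrak{D}_{k/\F(t)})$. This, in turn, is the classical identification $\Omega_{\mathfrak{o}_v/\mathfrak{o}_w} \cong \mathfrak{o}_v/\mathfrak{D}_{v/w}$, which follows from the trace-dual characterization of $\mathfrak{D}_{k/\F(t)}$ recalled in the paragraph preceding the statement (the annihilator of $\mathfrak{o}_v$ under the trace pairing is generated by $1/(d\tau/d\pi)$). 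The place $w_\infty$ is handled analogously after passing to the uniformizer $t^{-1}$.

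The second step is to take degrees in the global identity. By the Riemann--Roch theorem, any canonical divisor of $k$ has degree $2g_k - 2$, so $\deg(dt)_k = 2g_k - 2$. Since $g_{\F(t)} = 0$, a direct computation shows $(dt)_{\F(t)} = -2\, D_{w_\infty}$ has degree $-2$. Combined with $\deg N^{*}D = n \deg D$ --- a consequence of $\deg D_v = f_{v/w} \deg D_w$ together with the fundamental identity $\sum_{v/w} e_{v/w}\, f_{v/w} = n$ --- we get $\deg N^{*}(dt)_{\F(t)} = -2n$. Adding $\deg \mathfrak{D}_{k/\F(t)}$ yields the claimed formula $2g_k - 2 = -2n + \deg(\mathfrak{D}_{k/\F(t)})$.

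The principal obstacle is the local step: matching the trace-dual definition of $\mathfrak{D}_{k/\F(t)}$ with the valuation of $dt/d\pi$ requires either the general local duality theory for modules of relative differentials (as in Serre, \emph{Local Fields}, Ch.~III) or a direct computation with minimal polynomials of a generator of $\mathfrak{o}_v$ over $\mathfrak{o}_w$. Once this identification is in hand, assembling local data into the global divisorial identity is a sum over places, and the degree count is routine.
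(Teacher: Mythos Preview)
The paper does not prove this proposition at all: it is quoted directly from Rosen (\cite[Theorem 7.16]{rosen}) and used as a black box, so there is no ``paper's own proof'' to compare against. Your sketch is the standard proof of Riemann--Hurwitz (essentially the argument in Rosen or in Serre's \emph{Local Fields}): pull back a canonical divisor along the conorm, identify the defect with the different via local duality, and take degrees. The outline is correct, with the only genuine work being the local identification $v(d\tau/d\pi)=v(\mathfrak{D}_{v/w})$; your pointer to the monogenic case and the trace-dual description is the right way to handle it, though note that one should choose $\pi$ to generate $\mathfrak{o}_v$ over $\mathfrak{o}_w$ (possible here since residue extensions are separable) rather than an arbitrary uniformizer.
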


From now on we assume  the extension $k/\F(t)$ is split at $\infty$.  
Recall that this is equivalent to 
 $e_{v/w_{\infty}}=1$ and $f_{v/w_{\infty}}=1$  for every valuation $v\in\mathcal P_k$ with $v/w_\infty$. This implies that $k/\F(t)$ is geometric and separable.

 The discriminant divisor of $k/\F(t)$ is $\mathfrak d_{k/\F(t)}= N_{k/\F(t)}(\mathfrak{D}_{k/\F(t)})$. Since $k/\F(t)$ is geometric, we have 
\begin{align}\label{eq;Dk}
\deg _k \mathfrak{D}_{k/\F(t)}=\deg_{\F(t)} \mathfrak d_{k/\F(t)}
\end{align}
according to \cite[Proposition 7.7]{rosen}. Recall that $d_k$ is the discriminant of $k$ defined in the introduction. Since $k/\F(t)$ splits at infinity we have  
\begin{align}\label{eq;dk}
\mathfrak d_{k/\F(t)}=(d_k)_{\F(t),o}.
\end{align}
For a proof of this see \cite[p. 201]{Neukirch}.
 In view of (\ref{eq;Dk})
 and (\ref{eq;dk}) we have 
 \begin{align}\label{eq;finish}
 \deg _k \mathfrak{D}_{k/\F(t)}=\deg (d_k)_{\F(t),o}= -w_\infty (d_k), 
 \end{align}
 where the last equality follows from Proposition \ref{prop,divisor}.
Using (\ref{eq;finish}), Proposition \ref{thm;R-H thm} and the fact $|d_k|=q^{-w_\infty (d_k)}$, we get the following formula of $|d_k|$.
\begin{prop}\label{prop;main}
Suppose $k/\F(t)$  is a degree $n$ extension split at $\infty$. Then $|d_k|= q^{2n+2 g_k-2} $.
\end{prop}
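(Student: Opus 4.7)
The proof is essentially assembled from the three ingredients that the excerpt has just set up: the Riemann--Hurwitz formula, the norm-compatibility of the different and discriminant divisors under geometric extensions, and the fact that the discriminant divisor coincides with the finite part of $(d_k)$ when the extension splits at infinity. My plan is simply to string these together.

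First, I would note that because $k/\F(t)$ is split at $\infty$ (so in particular $e_{v/w_\infty}=f_{v/w_\infty}=1$ for every $v$ over $w_\infty$), the extension is geometric and separable, which is what is needed to apply both Proposition~\ref{thm;R-H thm} (Riemann--Hurwitz) and the norm identity \eqref{eq;Dk}. Applying Riemann--Hurwitz to $k/\F(t)$ gives
\[
\deg_k \mathfrak{D}_{k/\F(t)} = 2g_k - 2 + 2n.
\]

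Next I would rewrite $\deg_k \mathfrak{D}_{k/\F(t)}$ in terms of $w_\infty(d_k)$. Using the identity \eqref{eq;Dk} from the geometric hypothesis we pass from the different to the discriminant divisor:
\[
\deg_k \mathfrak{D}_{k/\F(t)} = \deg_{\F(t)} \mathfrak{d}_{k/\F(t)}.
\]
Since $k/\F(t)$ splits at $\infty$, equation \eqref{eq;dk} gives $\mathfrak{d}_{k/\F(t)} = (d_k)_{\F(t),o}$, and then Proposition~\ref{prop,divisor} (which tells us $\deg (d_k)_{\F(t)} = 0$, so $\deg (d_k)_{\F(t), o} = \deg (d_k)_{\F(t), \infty} = -w_\infty(d_k)$, using that $w_\infty$ is the only place of $\F(t)$ at infinity) gives
\[
\deg_{\F(t)} \mathfrak{d}_{k/\F(t)} = -w_\infty(d_k).
\]

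Finally, I would translate this back to the absolute value. By the definition of $|\cdot|$ on $\K = \F((t^{-1}))$ we have $|d_k| = q^{-w_\infty(d_k)}$. Combining the two chains of equalities,
\[
|d_k| = q^{-w_\infty(d_k)} = q^{\deg_k \mathfrak{D}_{k/\F(t)}} = q^{2n + 2g_k - 2},
\]
which is the claim. There is no real obstacle here since each step is either a definition, a direct citation of an earlier proposition in the section, or a consequence of the split-at-infinity hypothesis; the only thing to be careful about is bookkeeping the sign conventions (between $v(d_k)$ and $|d_k|$, and between the two parts of the principal divisor $(d_k)_{\F(t)}$), so that is the piece I would write out with the most care.
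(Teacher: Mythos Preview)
Your proof is correct and follows essentially the same approach as the paper: combine the Riemann--Hurwitz formula with \eqref{eq;Dk}, \eqref{eq;dk}, Proposition~\ref{prop,divisor}, and the identity $|d_k|=q^{-w_\infty(d_k)}$ to obtain \eqref{eq;finish} and hence the result. The only difference is presentational---the paper assembles \eqref{eq;finish} in the text preceding the proposition rather than inside a formal proof environment.
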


 Proposition \ref{prop;main} reduces to the estimate of the minimal  discriminant to  calculating the minimal genus of given $n$. Let 
\[g_{\min}(n,q)=\min \{g_k: [k:\F(t)]=n,  k \text{ is split at }\infty\},\]
then 
\begin{align}\label{eq;use}
    \textup{disc-}\mathfrak m_n=\min |d_k|^{1/2}=q^{n-1+g_{\min}(n,\F)}
\end{align}
by Proposition \ref{prop;main}.
 In the remaining of this section we  use results in  arithmetic geometry to
 calculate  the minimal genus. 

\begin{lem}[\textbf {Serre Bound}]\label{thm; H-W Bound}
Let  $N_{k}(\mathbb{F})$ be the number of the prime divisors of degree one on $k$. Then we have
\[ |N_{k}(\mathbb{F})-q-1|_{\R}\leq \lfloor 2\sqrt{q}\rfloor g_{k}.\]
Here $|~~|_\R$ is the usual absolute value in $\R$ and $\lfloor x \rfloor$ is the largest integer $\leq x$.
\end{lem}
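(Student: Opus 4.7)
The proof rests on two ingredients: the Weil ``Riemann Hypothesis'' for curves, and an elementary arithmetic refinement due to Serre.

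The first step is to pass to the smooth projective curve $X/\F$ whose function field is $k$, so that prime divisors of degree one on $k$ correspond bijectively to $\F$-rational points of $X$. The zeta function $Z_k(T)=\prod_{v\in\mathcal P_k}(1-T^{\deg v})^{-1}$ factors, by Weil's theorem, as
\[
Z_k(T)=\frac{L_k(T)}{(1-T)(1-qT)},\qquad L_k(T)=\prod_{i=1}^{g_k}(1-\alpha_iT)(1-\bar\alpha_iT),
\]
where each $\alpha_i\in\C$ satisfies $|\alpha_i|=\sqrt q$. Taking the logarithmic derivative of $Z_k(T)$ and matching the coefficient of $T$ (the ``explicit formula'') gives
\[
N_k(\F)-q-1=-\sum_{i=1}^{g_k}x_i,\qquad x_i:=\alpha_i+\bar\alpha_i\in\R,\ |x_i|\le 2\sqrt q.
\]
The problem is thereby reduced to bounding $|\sum_i x_i|$ by $\lfloor 2\sqrt q\rfloor\,g_k$.

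The second step is Serre's trick. Set $m=\lfloor 2\sqrt q\rfloor$ and introduce the auxiliary real numbers $a_i=m+1+x_i$ and $b_i=m+1-x_i$. Each $x_i$ is a totally real algebraic integer (being the trace of $\alpha_i$ down from a CM-type setting), and its Galois conjugates lie in the multiset $\{x_1,\dots,x_{g_k}\}$, hence are all bounded in absolute value by $2\sqrt q<m+1$ when $2\sqrt q\notin\Z$. Therefore $a_i$ and $b_i$ are totally positive algebraic integers, and the rational integer norms $\prod_i a_i$ and $\prod_i b_i$ are each $\ge 1$. Applying AM--GM then yields
\[
\sum_i a_i\ge g_k\bigl(\textstyle\prod_i a_i\bigr)^{1/g_k}\ge g_k,\qquad \sum_i b_i\ge g_k,
\]
which rearrange to $-mg_k\le\sum_i x_i\le mg_k$. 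Combined with the displayed formula for $N_k(\F)-q-1$, this gives the claimed inequality.

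The main obstacle is the perfect-square case $q=r^2$, in which $m=2r=2\sqrt q$ and the strict inequality $|x_i|<m+1$ is lost; now individual factors $a_i$ or $b_i$ could vanish and the AM--GM step must be handled with care. The remedy is to observe that any $x_i=\pm m$ is rational, so its entire Galois orbit consists of the same value, contributing exactly $\pm m$ per index; factoring out such orbits and applying the AM--GM argument to the remaining strictly positive factors recovers the same inequality. This bookkeeping is the only nontrivial piece of the argument and completes the proof.
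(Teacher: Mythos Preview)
Your proof is correct and follows the standard Serre argument; however, the paper does not actually give a proof of this lemma---it is stated as a known result (the Serre bound) and used as a black box. So there is nothing in the paper to compare against.

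One small remark on your write-up: your treatment of the perfect-square case is more elaborate than necessary. When $q=r^2$ you have $m=2r$ and $|x_i|\le 2\sqrt q=m$, so in fact $a_i=m+1+x_i\ge 1$ and $b_i=m+1-x_i\ge 1$ are still strictly positive; no factor can vanish. The AM--GM step therefore goes through uniformly, since $\prod_i a_i$ and $\prod_i b_i$ are positive rational integers in every case. Your factoring-out-orbits workaround is harmless but not needed.
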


\begin{lem}\label{thm;ineq genus}
    $g_{\min}(n,q)\ge (n-q-1)/\lfloor 2\sqrt{q}\rfloor $ and $g_{\min}(n,q)=0$ if and only if $2\leq n\leq q+1$. 
\end{lem}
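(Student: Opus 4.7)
The plan is to combine the Serre Bound with a direct construction.

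For the first inequality, I would observe that if $k/\F(t)$ is a degree $n$ extension split at $\infty$, then there are $n$ distinct valuations $v_1,\dots,v_n$ of $k$ lying above $w_\infty$, each with $f_{v_i/w_\infty}=e_{v_i/w_\infty}=1$. Since $F(w_\infty)=\F$ and $f_{v_i/w_\infty}=1$, each $v_i$ is a prime divisor of $k$ of degree one. Therefore $N_k(\F)\geq n$, and Lemma \ref{thm; H-W Bound} yields
\[
n-q-1\leq N_k(\F)-q-1\leq \lfloor 2\sqrt{q}\rfloor\, g_k.
\]
Taking the minimum over all such $k$ gives $g_{\min}(n,q)\geq (n-q-1)/\lfloor 2\sqrt{q}\rfloor$.

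For the ``only if'' direction in the genus-zero characterization, if $g_{\min}(n,q)=0$, the displayed inequality forces $n-q-1\leq 0$, hence $n\leq q+1$; together with the standing hypothesis $n\geq 2$ this gives the claim.

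For the ``if'' direction, I would construct an explicit degree $n$ extension of $\F(t)$ of genus $0$ split at $\infty$, whenever $2\leq n\leq q+1$. Take $k=\F(y)$, which has genus $0$. Choose $n$ distinct points $P_1,\dots,P_n$ of $\mathbb{P}^1(\F)=\F\cup\{\infty\}$, which is possible since $|\mathbb{P}^1(\F)|=q+1\geq n$. Using Lemma \ref{lem;rr} (or a direct interpolation on $\mathbb{P}^1$), one can produce a rational function $\phi\in\F(y)$ of degree exactly $n$ whose divisor of zeros is $P_1+\cdots+P_n$ (each point occurring with multiplicity one). Set $t=1/\phi$; then $[k:\F(t)]=\deg\phi=n$, the constant field of $k$ is $\F$ (so $k/\F(t)$ is geometric), and the places of $k$ above $w_\infty$ are precisely the $P_i$, each of degree one and unramified (since $v_{P_i}(t)=-1$). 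Hence $k/\F(t)$ is split at $\infty$ and $g_k=0$.

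The main obstacle is the construction step: one needs to know that a degree $n$ rational function on $\mathbb{P}^1$ with a prescribed simple zero divisor supported in $\mathbb{P}^1(\F)$ exists. For $n\leq q$ this is immediate from $\phi(y)=\prod_{i=1}^{n}(y-c_i)^{-1}$ after choosing distinct $c_i\in\F$; the borderline case $n=q+1$ (where one of the $P_i$ is $\infty$) requires choosing a denominator of degree $q+1$ coprime to $y^q-y$, for instance any irreducible polynomial of degree $q+1$ over $\F$, which exists for every $q\geq 2$.
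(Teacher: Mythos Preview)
Your argument follows essentially the same route as the paper: the lower bound via the Serre bound and $N_k(\F)\ge n$, and the genus-zero construction inside $k=\F(y)$ by choosing $t$ with simple poles at $n$ chosen $\F$-rational points. Two small points deserve comment.

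First, there is a slip in your explicit formula for $n\le q$: you write $\phi(y)=\prod_{i=1}^{n}(y-c_i)^{-1}$, but this function has \emph{poles}, not zeros, at the $c_i$; with that $\phi$ one gets $t=1/\phi=\prod_i(y-c_i)$, whose pole divisor is $n\cdot[\infty]$, so $k/\F(t)$ would be totally ramified rather than split at $\infty$. You clearly intend $\phi(y)=\prod_i(y-c_i)$, giving $t=\prod_i(y-c_i)^{-1}$ with $(t)_{k,\infty}=\sum_i D_{v_{c_i}}$, which is exactly the paper's choice.

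Second, for $n=q+1$ your construction differs slightly from the paper's but is equally valid. The paper takes the specific numerator $y^{q+1}-y^2+1$ (coprime to $y^q-y$ because $c^{q+1}-c^2+1=1$ for $c\in\F$) and then invokes Hensel's lemma on the minimal polynomial of $y$. Your choice of an irreducible denominator of degree $q+1$ works just as well; reading the places over $w_\infty$ directly from the pole divisor of $t$, as you do, is arguably cleaner than the Hensel argument and is in fact the method the paper uses later in Lemma~\ref{lem;g=1}.
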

\begin{proof}
    Suppose  $k$ is split at infinity, then  $ N_{k}(\F)\ge n$.
    Therefore, 
    \[
    |N_{k}(\F)-q-1|_\R \ge N_{k}(\F)-q-1\ge n-q-1.
    \]
    This together with Theorem 2.14 implies 
    \[
   \lfloor 2\sqrt{q}\rfloor g_k\ge  n-q-1.\]
   By considering $k$ with $g_k= g_{min}(n,q)$ we get 
   the first conclusion. In particular,  this implies 
    $g_{min}(n,q)>0$ when $n> q+1$. 
To complete the proof we only need to show $g_{min}(n, q)=0$ when  $2\le n\leq q+1$.
It suffices to find an embedding of $\F(t)$ into the rational function field $k=\F(y)$ so that $k$ is a degree $n$ extension split at infinity. 

   Let $D_{v_\infty}$ and $D_{v_i}\ (1
    \leq i\leq q)$ be the prime divisors in $k$ of degree one, such that $v_{\infty}(y)=-1$ and $v_i(y-c_i)=1\ (1
    \leq i\leq q)$ where $\{c_i\}_{i=1}^q$ are distinct elements in $\F$.
     For $2\leq n\leq q$, let 
    $t=\prod_{i=1}^{n}(y-c_i)^{-1}$, then
    \[(t)_{k,\infty}=\sum_{i=1}^nD_{v_i}.\]
    By Proposition \ref{prop,divisor} we have $[k:\F(t)]=n$.
     The minimal polynomial of $y$ is \[
    f(Y)=(Y-c_1)\cdots (Y-c_n)-t^{-1}
    \]
    which has $n$ distinct roots in $\F$. Therefore, $f$ splits over $\F((t^{-1}))$ by Hensel's lemma \cite[p. 129]{Neukirch}. This implies 
    that $k$ splits at infinity and hence $g_{\min}(n,q)=0$.

    For $n=q+1$, let $t=(y^{q+1}-y^2+1)/\prod_{i=1}^{q}(y-c_i)$, then
    \[(t)_{k,\infty}=\sum_{i=1}^qD_{v_i}+D_{v_{\infty}}.\]
     By Proposition \ref{prop,divisor} we have  $[k:\F(t)]=q+1$. The element
    $y$ is the zero of the polynomial \[
    f(Y)= t^{-1}(Y^{q+1}-Y^2+1)-(Y-c_1)\cdots (Y-c_q).
    \]
    By Hensel's lemma \cite[p.~129]{Neukirch}, $f$ splits into a product of degree
    one factors in $\F[[t^{-1}]][Y]$. Therefore, $f$ has $q+1$ distinct roots in 
    $\F((t^{-1}))$. So 
    $k$ is split at $\infty$, and hence $g_{\min}(q+1,q)=0$.
\end{proof}

Suppose   $k/\F(t)$ is a degree n extension split at $\infty$ and $g_{k}=1$. By \cite[Theorem 4.26]{salvador2006topics}, $k$ is an elliptic function field, i.e.~$k$ is the function field of an elliptic curve $E$.  We denote  an elliptic  curve $E$ defined over $\F$ by  $E/\F$.
   Let  $\sharp E(\F)$ be  the number  of $\F$-rational points on $E$.

\begin{lem}\label{lem;g=1}
    Let $E/\F$ be an elliptic curve, and $k$ be the function field of $E$. Suppose $\sharp E(\F)\ge n$ and $q>2$. Then there exists an element $t\in k^{*}$ such that $[k:\F(t)]=n$ and the extension $k/\F(t)$ is split at $\infty$. 
\end{lem}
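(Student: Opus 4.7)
The plan is to construct $t \in k^{*}$ whose pole divisor is $(t)_{k,\infty} = P_1 + \cdots + P_n$ for $n$ distinct $\F$-rational points $P_1, \ldots, P_n$ of $E$; such points exist since $\sharp E(\F) \ge n$. Granting this, Proposition \ref{prop,divisor} gives $[k : \F(t)] = \deg (t)_{k,\infty} = n$, and splitting at $\infty$ is automatic, for each $P_i$ is $\F$-rational and carries a pole of order exactly one, so every place over $w_\infty$ satisfies $e_{v/w_\infty} = f_{v/w_\infty} = 1$.

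Setting $D = P_1 + \cdots + P_n$, a function $t \in L(D)$ has pole divisor exactly $D$ iff $t \notin L(D - P_i)$ for every $i$. Since $g_k = 1$ and $\deg D = n \ge 2$, Lemma \ref{lem;rr} yields $l(D) = n$ and $l(D - P_i) = n - 1$. A direct union-of-hyperplanes bound on $L(D) \setminus \bigcup_i L(D - P_i)$ would only succeed when $n \le q$, so I would use the residue pairing instead. Fix a nonzero holomorphic differential $\omega$ on $E$; since $g_k = 1$ the canonical divisor has degree zero, forcing $\omega$ to be nonvanishing. For $f \in L(D)$, set $r_i(f) = \Res_{P_i}(f\omega) \in \F$; each $r_i$ is $\F$-linear with kernel $L(D - P_i)$.

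Assembling these into $r = (r_1, \ldots, r_n) : L(D) \to \F^n$, the kernel is $\bigcap_i L(D - P_i) = L(0) = \F$, so $\dim_\F \im r = n - 1$. Because $f\omega$ has poles only at $P_1, \ldots, P_n$, the residue theorem on the smooth projective curve $E$ over the perfect field $\F$ gives $\sum_i r_i(f) = 0$. Therefore $\im r$ coincides with the hyperplane $H = \{(x_1, \ldots, x_n) \in \F^n : x_1 + \cdots + x_n = 0\}$, and the task reduces to producing a point of $H$ with every coordinate nonzero.

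A standard character-sum computation gives
\[
|\{(x_1,\ldots,x_n) \in H : x_i \ne 0 \text{ for all } i\}| = \frac{(q-1)^n + (-1)^n(q-1)}{q},
\]
which is strictly positive whenever $q \ge 3$ and $n \ge 2$; the single excluded case ($q = 2$ with $n$ odd) is precisely what the hypothesis $q > 2$ rules out. Any preimage $t \in L(D)$ of such a vector under $r$ has pole divisor exactly $D$, completing the argument. The main obstacle is this positivity estimate: it is both the technical heart of the proof and the structural reason the hypothesis $q > 2$ cannot be dropped, since for $q = 2$ and odd $n$ the hyperplane $\{x_1 + \cdots + x_n = 0\} \subset \F_2^n$ contains no all-nonzero vector and an alternative construction would be required.
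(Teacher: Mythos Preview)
Your proof is correct and takes a genuinely different route from the paper's. The paper builds $t$ by an elementary induction: using Riemann--Roch it first produces functions $x_{ij}\in k^*$ with $(x_{ij})_{k,\infty}=D_{v_i}+D_{v_j}$, takes $t_2=x_{12}$, and then at each step passes from $t_m$ with pole divisor $\sum_{i\le m}D_{v_i}$ to $t_{m+1}=c\,x_{1(m+1)}+t_m$, where $c\in\F^*$ is chosen so that the leading Laurent coefficients at $v_1$ do not cancel. The condition $q>2$ enters only to guarantee that a nonzero $c$ avoiding one forbidden value exists.

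Your approach replaces this step-by-step construction with a single global obstruction: the residue map $r:L(D)\to\F^n$ identifies the space of candidate functions, the residue theorem forces $\im r$ to be the hyperplane $\sum x_i=0$, and the explicit count $\bigl((q-1)^n+(-1)^n(q-1)\bigr)/q$ finishes the job. This is more conceptual and in fact sharper: it shows the conclusion already holds for $q=2$ whenever $n$ is even, and isolates $q=2$, $n$ odd as the genuine obstruction---which dovetails nicely with the paper's later Proposition~\ref{thm;q=2}, where the case $q=2$, $n=5$ is shown to fail and $q=2$, $n=4$ is handled by $t_4=x_{12}+x_{34}$ (exactly the preimage of $(1,1,1,1)$ under your map $r$). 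The paper's argument, on the other hand, is more elementary in that it avoids differentials and the residue theorem, and it gives an explicit recursive formula for $t$.
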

\begin{proof}
     Since $\sharp E(\F)\ge n$, there exist $n$ degree one divisors, say $D_{v_i}\  (1\leq i\leq n)$.
     We claim that there exists  $t\in k^{*}$ such that 
\begin{align}\label{eq;split}
(t)_{k,\infty}=\sum_{i=1}^nD_{v_i}.
\end{align}
Then we have  $[k:\F(t)]=n$ by Proposition \ref{prop,divisor}.  Moreover,
(\ref{eq;split}) also implies the
extension $k/\F(t)$ is split at $\infty$.

We prove the claim  by induction.
  By Lemma \ref{lem;rr}, we have 
     \begin{align}
     l(D_{v_i}+D_{v_j})&=\textup{deg}(D_{v_i}+D_{v_j})-g_k+1=2 \quad\mbox{and}\quad 
     l(D_{v_i})=1. \notag
     \end{align}
So there exists $x_{ij}\in k^{*}$ such that
 \begin{align}\label{eq;xij}
  (x_{ij})_{k,\infty}=D_{v_i}+D_{v_j}.   
 \end{align}
 Suppose for $2\le  m<n$ there exists $t_{m}\in k^{*}$ such that
\[(t_m)_{k,\infty}=\sum_{i=1}^{m}D_{v_i}.\]
Note that 
\[(x_{1{(m+1)}})_{k,\infty}=D_{v_1}+D_{v_{m+1}}.\]
Suppose $k_{v_1}=\F((z))$ and $v_1(z)=1$.
Then the Laurent series  expansions of 
\begin{align}
x_{1(m+1)}&=\frac{a_{-1}}{z}+\sum_{i=0}^{\infty}a_i z^{i},\notag\\
t_m&=\frac{b_{-1}}{z}+\sum_{i=0}^{\infty}b_i z^{i}.\notag
\end{align}
 Since $q>2$, we can always find a constant $c\in \F^{*}$ such that $ca_{-1}+b_{-1}\neq 0$. Then for $t_{m+1}=cx_{1(m+1)}+t_m$ we have 
\[(t_{m+1})_{k,\infty}=(cx_{1(m+1)}+t_m)_{k,\infty}=\sum_{i=1}^{m+1}D_{v_i}.\]
 
\end{proof}

By this lemma, the question now is to find the maximal possible $\sharp E(\F)$. 
 Let 
\[A_q(1)=\sup\{\sharp N(E): \text{$E/\F$ curve of genus $1$}\}. \]
The following result is from Waterhouse \cite{waterhouse1969abelian} using the theorem from Tate and Honda \cite{honda1968isogeny}. The general problem of rational points on curves over finite fields can be found in  \cite{serre2020rational}.
\begin{lem}\label{thm;count}\cite[Theorem 2.6.3]{serre2020rational} 
\begin{align*}
    A_q(1)= \begin{cases}
    q+\lfloor2\sqrt{q}\rfloor & \text{where  $p|\lfloor2\sqrt{q}\rfloor$, $e$ is odd  and $e \ge 5$ }, \\
    q+\lfloor2\sqrt{q} \rfloor +1& \text{otherwise}.\end{cases}
    \end{align*}
\end{lem}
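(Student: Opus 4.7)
The plan is to combine the Hasse--Weil bound with the Honda--Tate--Waterhouse classification of Frobenius traces of elliptic curves over finite fields. For any elliptic curve $E/\F$ the Hasse--Weil bound gives $\sharp E(\F)=q+1-a$ with $|a|_\R\le m$, where $m:=\lfloor 2\sqrt q\rfloor$, so the computation of $A_q(1)$ reduces to deciding whether the extremal value $a=-m$ is realized as a Frobenius trace; if it is, then $A_q(1)=q+1+m$, and otherwise the next realizable value of $a$ determines $A_q(1)$.

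Waterhouse's theorem \cite{waterhouse1969abelian} (compare \cite{serre2020rational}) characterizes the integers $a$ with $|a|_\R\le 2\sqrt q$ that arise as Frobenius traces of elliptic curves over $\F=\F_{p^e}$: either (i) $\gcd(a,p)=1$, or one of the sporadic ``supersingular'' conditions (ii) $e$ is even and $a=\pm 2\sqrt q$, (iii) $e$ is even, $p\not\equiv 1\pmod 3$, and $a=\pm\sqrt q$, (iv) $e$ is odd, $p\in\{2,3\}$, and $a=\pm p^{(e+1)/2}$, or (v) $a=0$ (under mild extra conditions). I would split on whether $p\mid m$.

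If $p\nmid m$, clause (i) realizes $a=-m$ and $A_q(1)=q+1+m$. If $p\mid m$ and $e$ is even, then $\sqrt q=p^{e/2}\in\Z$ and $m=2\sqrt q$, so (ii) realizes $a=-m$. For $p\mid m$ with $e\in\{1,3\}$ one verifies directly that either $p\nmid m$ automatically (e.g.\ whenever $p\ge 5$, using a short estimate on the fractional part of $2\sqrt{p^{e}}$), or $p\in\{2,3\}$ with $e=1$ and $m=p=p^{(e+1)/2}$, so that clause (iv) realizes $a=-m$. Each of these situations contributes $A_q(1)=q+1+m$, which is the ``otherwise'' branch of the formula.

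Finally, for $p\mid m$ and $e$ odd with $e\ge 5$, none of (i)--(v) realizes $a=-m$: (i) fails by assumption; (ii)--(iii) require $e$ even; (v) forces $a=0$; and (iv) is either vacuous (when $p\ge 5$) or, for $p\in\{2,3\}$, realizes only $|a|=p^{(e+1)/2}$, which is strictly less than $m=\lfloor 2p^{e/2}\rfloor$ by the elementary inequality $(2-\sqrt p)\,p^{e/2}>1$ valid for $e\ge 5$. Thus $a=-m$ is unattainable, but $a=-(m-1)$ satisfies $\gcd(m-1,p)=1$ because $p\mid m$, so clause (i) realizes it and $A_q(1)=q+m$. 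The main technical obstacle is this final case, requiring both the ruling out of the sporadic clause (iv) via the numerical comparison of $p^{(e+1)/2}$ with $\lfloor 2p^{e/2}\rfloor$ and the observation that $p\mid m$ automatically furnishes the near-extremal value $a=-(m-1)$.
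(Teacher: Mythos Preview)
The paper does not supply its own proof of this lemma; it is quoted as \cite[Theorem 2.6.3]{serre2020rational}, with the attribution to Waterhouse \cite{waterhouse1969abelian} and Honda--Tate \cite{honda1968isogeny} given in the surrounding text. Your proposal is a correct reconstruction of the argument behind that citation via the Waterhouse classification of Frobenius traces. The case analysis is sound; the one step you leave implicit is the claim that $p\nmid\lfloor 2\sqrt q\rfloor$ whenever $e=3$, which does follow from an elementary estimate (writing $m=2p\lfloor\sqrt p\rfloor+\lfloor 2p\{\sqrt p\}\rfloor$ and checking that $\lfloor 2p\{\sqrt p\}\rfloor$ can be neither $0$ nor $p$ for prime $p$), and your inequality $(2-\sqrt p)\,p^{e/2}>1$ for $p\in\{2,3\}$ and $e\ge 5$ correctly rules out clause (iv) producing $|a|=m$ in the final case. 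Since the paper treats the result as a black box, there is no further comparison to make.
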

\begin{rem}\label{rem;ell}
    The elliptic curves over $\F$ with the same number of $\F$-rational points are isogenous \cite[Theorem 4.1]{waterhouse1969abelian}.
\end{rem}

\begin{prop}\label{thm;q>2}
  Let $N_q$ be as in Theorem \ref{thm;main} and      $q>2$. Then  $g_{min}(n,q)=1$ if and only if $q+2\leq n\leq N_{q}$. 
\end{prop}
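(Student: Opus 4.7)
The plan is to prove both directions by combining Lemma \ref{thm;ineq genus}, Lemma \ref{lem;g=1}, and Lemma \ref{thm;count}, noting that for $q>2$ the quantity $N_q$ coincides exactly with $A_q(1)$.

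\medskip

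\noindent\textbf{The ``if'' direction.} Assume $q+2\le n\le N_q$. First, since $n>q+1$, Lemma \ref{thm;ineq genus} yields $g_{\min}(n,q)\ge 1$. For the upper bound, Lemma \ref{thm;count} together with $N_q=A_q(1)$ (for $q>2$) gives an elliptic curve $E/\F$ with $\sharp E(\F)=A_q(1)\ge n$. Let $k$ be the function field of $E$; then $g_k=1$, and Lemma \ref{lem;g=1} (whose hypothesis $q>2$ we are assuming) produces $t\in k^*$ with $[k:\F(t)]=n$ and $k/\F(t)$ split at $\infty$. Hence $g_{\min}(n,q)\le 1$, giving equality.

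\medskip

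\noindent\textbf{The ``only if'' direction.} Suppose $g_{\min}(n,q)=1$. Since the genus equals $1$ rather than $0$, Lemma \ref{thm;ineq genus} forces $n\ge q+2$. Pick a witness $k/\F(t)$ of degree $n$, split at $\infty$, with $g_k=1$, and let $C/\F$ be the associated smooth projective curve. Because $k$ is split at $\infty$, there are $n$ distinct degree-one places $v_1,\dots,v_n$ of $k$ lying above $w_\infty$, each of which corresponds to a distinct $\F$-rational point of $C$. In particular $\sharp C(\F)\ge n\ge q+2\ge 5>0$, so $C$ has a rational point and is therefore an elliptic curve $E/\F$. Applying Lemma \ref{thm;count} again gives $n\le\sharp E(\F)\le A_q(1)=N_q$.

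\medskip

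\noindent\textbf{Where the work is.} The essential ingredients have all been assembled in the earlier lemmas: Lemma \ref{thm;ineq genus} handles the separation between the genus-$0$ and genus-$1$ regimes, Lemma \ref{lem;g=1} converts maximal elliptic curves over $\F$ into extensions split at $\infty$ (this is where the constraint $q>2$ enters, in order to choose the scaling constant $c\in\F^*$), and Lemma \ref{thm;count} supplies the sharp bound $A_q(1)=N_q$. The only thing one must verify carefully is the identification of a genus-one function field, split at $\infty$, with an elliptic function field: since splitting at $\infty$ automatically produces $\F$-rational places, the associated genus-$1$ curve has a rational point and thus an elliptic curve structure, so $\sharp E(\F)\ge n$ follows, and Lemma \ref{thm;count} closes the argument. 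No serious new obstacle is expected beyond correctly matching the two case-distinctions in the definitions of $N_q$ and $A_q(1)$ under the standing assumption $q>2$.
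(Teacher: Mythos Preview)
Your proof is correct and follows the same approach as the paper, which simply records that the proposition ``follows from Lemmas \ref{thm;ineq genus} and \ref{thm;count}'' (with Lemma \ref{lem;g=1} implicitly used for the existence direction). Your write-up makes explicit the two points that the paper leaves to the reader: that $N_q=A_q(1)$ when $q>2$, and that a genus-one witness $k$ split at $\infty$ has enough degree-one places to force the associated curve to be elliptic, so that the Waterhouse bound in Lemma \ref{thm;count} applies to give $n\le N_q$.
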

\begin{proof}
It follows from Lemmas \ref{thm;ineq genus}  and \ref{thm;count}.
\end{proof}

\begin{prop}\label{thm;q=2}
 Suppose $q=2$. Then  $g_{\min}(n,q)=1$ if and only if  $n=N_q=4$.
\end{prop}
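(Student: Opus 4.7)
The plan is to combine the Serre bound with a parity argument on residues that is special to characteristic two. We have $N_2 = 4$, and by Lemma \ref{thm;count}, $A_2(1) = 5$. Lemma \ref{thm;ineq genus} gives $g_{\min}(n,2) = 0$ iff $n \le 3$, while the Serre bound (Lemma \ref{thm; H-W Bound}), applied to any degree-$n$ extension $k/\F_2(t)$ split at infinity, yields $n \le N_k(\F_2) \le 3 + 2 g_k$ and hence $g_k \ge (n-3)/2$; this already forces $g_{\min}(n,2) \ge 2$ for $n \ge 6$. The problem therefore reduces to distinguishing $g_{\min}(n,2) = 1$ from $g_{\min}(n,2) \ge 2$ for $n \in \{4,5\}$.

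In both cases $g_{\min}(n,2) = 1$ would require an elliptic curve $E/\F_2$ (with $\sharp E(\F_2) \ge n$, hence $n \le A_2(1) = 5$) and a function $t \in k := \F_2(E)$ with pole divisor exactly $D := \sum_{i=1}^n D_{v_i}$ for $n$ distinct $\F_2$-rational places $v_1, \dots, v_n$. The inductive construction of Lemma \ref{lem;g=1} required $q > 2$ and is not available here, so I would analyze the obstruction cohomologically. Fixing a uniformizer $\pi_i$ at each $v_i$, the leading-coefficient map
\[
\Psi \colon L(D) \longrightarrow \F_2^n, \qquad \Psi(t) = (c_1(t), \dots, c_n(t)),
\]
where $c_i(t) \in \F_2$ is the coefficient of $\pi_i^{-1}$ in the Laurent expansion of $t$ at $v_i$, has $\ker \Psi = L(0) = \F_2$; by Riemann-Roch (Lemma \ref{lem;rr}), $l(D) = n$. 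Hence $\mathrm{image}\,\Psi$ is a hyperplane in $\F_2^n$, and a suitable $t$ exists iff $(1, \dots, 1) \in \mathrm{image}\,\Psi$.

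The key step is identifying this hyperplane via the global residue theorem. On an elliptic curve $E$, the space of regular differentials is one-dimensional, spanned by a nowhere-vanishing $\omega$; writing $\omega = g_i \, d\pi_i$ locally at each $v_i$ gives $g_i(v_i) \in F(v_i)^* = \F_2^* = \{1\}$. Consequently $\mathrm{res}_{v_i}(t\omega) = c_i(t)\, g_i(v_i) = c_i(t)$, and $\sum_i \mathrm{res}_{v_i}(t\omega) = 0$ becomes $\sum_i c_i(t) = 0$ in $\F_2$ for every $t \in L(D)$. A dimension count then identifies
\[
\mathrm{image}\,\Psi = \bigl\{(c_i) \in \F_2^n : \textstyle\sum_i c_i = 0 \bigr\},
\]
so $(1, \dots, 1)$ lies in $\mathrm{image}\,\Psi$ precisely when $n$ is even. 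For $n = 4$ this produces the desired $t$ on any elliptic curve $E/\F_2$ with at least four rational points (e.g., $E\colon y^2 + y = x^3 + x$, which has five), yielding $g_{\min}(4, 2) = 1$. For $n = 5$ the parity obstruction defeats every choice of $E$ and $v_i$, so $g_{\min}(5, 2) \ge 2$.

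The main obstacle is the identification of the cokernel of $\Psi$ in terms of residues against the invariant differential; this is essentially a Serre-duality calculation, but over $\F_2$ it collapses cleanly to a parity check precisely because a nowhere-vanishing differential assumes the value $1$ at every rational point, isolating $n = 4$ as the unique value beyond the rational-function-field range where genus $1$ suffices.
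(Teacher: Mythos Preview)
Your argument is correct and lands on the same parity obstruction as the paper, but packages it more conceptually. The paper treats $n=4$ and $n=5$ by hand: for $n=4$ it simply exhibits $t_4=x_{12}+x_{34}$ (with $(x_{ij})_{k,\infty}=D_{v_i}+D_{v_j}$), while for $n=5$ it chooses the explicit basis $\{1,x_{12},x_{13},x_{14},x_{15}\}$ of $L(D)$, forces all four nonconstant coefficients to be $1$ by inspecting the poles at $v_2,\dots,v_5$, and then observes that at $v_1$ the four leading terms $1/z$ sum to $4/z=0$ in characteristic~$2$, killing the required pole. Your residue-theorem formulation unifies both cases: the map $\Psi$ has image exactly the sum-zero hyperplane because pairing against the nowhere-vanishing invariant differential reduces each residue to the bare leading coefficient (as $\F_2^{*}=\{1\}$), so $(1,\dots,1)\in\mathrm{image}\,\Psi$ iff $n$ is even. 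This buys you a uniform statement and an intrinsic identification of the obstruction (it is literally Serre duality for the inclusion $L(0)\hookrightarrow L(D)$), at the price of invoking the residue theorem and the structure of $\Omega^1$ on an elliptic curve; the paper's version stays within the elementary Riemann--Roch toolkit already set up in the section.
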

\begin{proof}
    According to Lemma \ref{thm;ineq genus}, we have $g_{\min}(n,2)=0$ for 
    $2\le n\le 3$ and  $g_{\min}(n,2)>1$ for $n>5$. 
    So we only need to consider the cases for  $n=4, 5$ where   $g_{\min}(n,2)\ge 1$ by Lemma \ref{thm;count}.

Let $E/\F$ be an elliptic curve with $\sharp E(\F)= 5$  and let  $k$ be its rational function field. 
Let $D_{v_i}\ (1\leq i\leq 5)$ be the divisors of degree one in $\mathcal{P}_k$. 
The proof of Lemma \ref{lem;g=1} implies that there exists
$x_{ij}\in k^{*}$ such that (\ref{eq;xij}) holds.
Let  $t_4=x_{12}+x_{34}$, then
\[(t_{4})_{k,\infty}=\sum_{i=1}^{4}D_{v_i}.\]
It follows that $k/\F(t_4)$ is a degree $4$ 
 extension  split at $\infty$. So $g_{\min}(4,2)=1$.
 
 We prove $g_{\min}(5,2)>1$ by contradiction. 
 Suppose $g_{\min}(5,2)=1$, then there exists a degree $5$ extension of $\F(t)$  split at $\infty$ with genus $1$. This extension must be isomorphic to the function field $k$ which corresponds to an elliptic curve $E$ with $\sharp E(\F)=5$. 
 This means that there exists
$t \in k^{*}$ such that 
\begin{align}
    \label{eq;t}
    (t )_{k,\infty}=\sum_{i=1}^{5}D_{v_i}.
\end{align}

By Corollary \ref{lem;rr}, we have 
$l(\sum_{i=1}^{5}D_{v_i})=5$. Since $L(D_{v_1}+D_{v_i})\subseteq L(\sum_{i=1}^{5}D_{v_i})$
for $2\le i\le 5$, we have 
 $x_{1i}$ belongs to the latter.   Thus 
\[t =c_1+\sum_{i=2}^{5}c_ix_{1i}\]
for some constants $c_i\in \F=\{0,1\}$. Note that  $v_i(t)=-1$ for $2\leq i\leq 5$ in view of (\ref{eq;t}), so $c_i=1$ for all $i$. Let $z\in k^{*}$ be a uniformizer at $v_1$ such that $v_1(z)=1$. Consider the Laurent expansion of $x_{1i}$ in $k_{v_1}=\F((z))$ and write
\[x_{1i}=\frac{1}{z}+\sum_{j=0}^{\infty}d_{ij}z^{j}.\]
Then 
\[v_1(t )=v_1(c_1+\sum_{i=2}^{5}x_{2i})
=v_1(c_1+\sum_{i=2}^{5}\sum_{j=0}^{\infty}d_{ij}z^{j})\ge 0.\]
This contradicts to $v_1(t )=-1$. Thus $g_{\min}(5,2)>1$.
\end{proof}

\begin{proof}
    [Proof of Theorem \ref{thm;algebraic}]
    It follows from (\ref{eq;use}), Propositions \ref{thm;q>2} and \ref{thm;q=2}. 
\end{proof}


\bibliographystyle{plain}
\bibliography{refs}

\end{document}